\markboth{\today}{\today}
\newtheorem{theorem}{Theorem}[section]
\newtheorem{prop}[theorem]{Proposition}
\newtheorem{lemma}[theorem]{Lemma}
\newtheorem{corollary}[theorem]{Corollary}
\newtheorem*{theoremA}{Theorem A}
\newtheorem*{corB}{Corollary B}
\newtheorem*{theoremC}{Theorem C}
\newtheorem*{theoremD}{Theorem D}
\newtheorem*{corE}{Corollary E}
\newtheorem*{corF}{Corollary F}
\theoremstyle{definition}
\newtheorem{remark}[theorem]{Remark}
\newcommand{\lra}{\longrightarrow}
\newcommand{\C}{\mathbb{C}}
\newcommand{\R}{\mathbb{R}}
\newcommand{\Q}{\mathbb{Q}}
\newcommand{\Z}{\mathbb{Z}}
\newcommand{\N}{\mathbb{N}}
\newcommand{\F}{\mathbb{F}}
\newcommand{\bQ}{\overline{{\Q}}}
\newcommand{\bF}{\overline{\F}}
\renewcommand{\O}{\mathcal{O}}
\newcommand{\diag}{\mathrm{diag}}
\newcommand{\rad}{\mathrm{rad}}
\newcommand{\br}{\overline{\rho}}
\newcommand{\Cl}{\mathrm{Cl}}
\newcommand{\GSp}{\mathrm{GSp}}
\newcommand{\Sp}{\mathrm{Sp}}
\newcommand{\GL}{\mathrm{GL}}
\newcommand{\SO}{\mathrm{SO}}
\newcommand{\OO}{\mathrm{O}}
\newcommand{\PSL}{\mathrm{PSL}}
\newcommand{\PGL}{\mathrm{PGL}}
\newcommand{\SL}{\mathrm{SL}}
\newcommand{\Sz}{\mathrm{Sz}}
\newcommand{\Sym}{\mathrm{Sym}}
\newcommand{\Gal}{\mathrm{Gal}}
\newcommand{\im}{\mathrm{Im}}
\newcommand{\Aut}{\mathrm{Aut}}
\newcommand{\coker}{\mathrm{coker}}
\newcommand{\End}{\mathrm{End}}
\newcommand{\Bil}{\mathrm{Bil}}
\newcommand{\Alt}{\mathrm{Alt}}
\renewcommand{\geq}{\geqslant}
\renewcommand{\ge}{\geqslant}
\renewcommand{\leq}{\leqslant}
\renewcommand{\le}{\leqslant}
\title[Non-existence of some Galois representations]{The non-existence of some
Galois representations of moderate dimension in small characteristic}
\author{Alexandru Ghitza \and Takuya Yamauchi}
\dedicatory{Dedicated to Yuichiro Taguchi on his 60th birthday}
\keywords{mod $p$ Galois representations, non-existence}
\address{Alexandru Ghitza \\
  School of Mathematics and Statistics\\
  University of Melbourne \\ }
\email{aghitza@alum.mit.edu}
\address{Takuya Yamauchi \\
Mathematical Inst. Tohoku Univ.\\
 6-3, Aoba, Aramaki, Aoba-Ku, Sendai 980-8578, JAPAN}
\email{takuya.yamauchi.c3@tohoku.ac.jp}
\begin{document}

\begin{abstract}
  Refining arguments of Moon \cite{Mo1,Mo2}, under the assumption of the Generalized Riemann Hypothesis, we prove the non-existence of irreducible mod $2$ Galois representations unramified outside $2$ of dimensions $\leq 4$, and of totally real such representations of dimensions $\leq 8$.
  We also prove the non-existence of irreducible totally real mod $3$ representations unramified outside $3$ of dimensions $\leq 4$.

  We show unconditionally that the image of an irreducible mod $2$ representation $\br:G_\Q\lra \GSp_4(\bF_2)$ that is unramified outside $2$ must be large.
  Under GRH, we then deduce the non-existence of such representations.
\end{abstract}

\maketitle

\tableofcontents

\section{Introduction}\label{intro}

Let $K$ be a number field, $d$ a positive integer, and $p$ a prime.
The non-existence of $d$-dimensional mod $p$ Galois representations of
$G_K:=\Gal(\overline{K}/K)$ with small ramification plays an important role in proving Serre's modularity conjecture
(cf. \cite{KW} for odd Galois representations with $K=\Q$ and $d=2$).

We will call a Galois representation $p$-ramified if it is unramified outside $\{p\}$.
The first results on $p$-ramified Galois representations $\br:G_\Q\lra \GL_2(\bF_p)$ are due to Tate \cite{Ta} for $p=2$ and Serre \cite{Se1} for $p=3$, followed by several authors who attacked this kind of problem for small $p,d$, and $K$ with $[K:\Q]\le 2$ (\cite{Mo2},\cite{MT},\cite{S}, \cite{GM}).

In this paper, we prove non-existence results of two distinct flavors, using two different methods.

The first type is conditional on the Generalized Riemann Hypothesis and builds on the methods developed by Moon in \cite{Mo1,Mo2}:
\begin{theoremA}
  Assume GRH.
  There exist no irreducible
  \begin{enumerate}[(a)]
    \item  $2$-ramified representations $\br:G_\Q\lra\GL_d(\bF_2)$ with $d\in\{2,3,4\}$;

    \item totally real $3$-ramified representations $\br:G_\Q\lra\GL_d(\bF_3)$ with $d\in\{2,3,4\}$;

    \item totally real $2$-ramified representations $\br:G_\Q\lra\GL_d(\bF_2)$ with $d\in\{5,6,7,8\}$.
  \end{enumerate}
\end{theoremA}
Note that the case $p=2$, $d\leq 4$ was proved in \cite{Mo2} by Moon without using GRH, but under the assumption that $\Q_{\br}:=\bQ^{\ker(\br)}$ is totally real.

As an application of Theorem A, we show that the case $p=2$, $d=4$ together with a straightforward argument gives
\begin{corB}
  Assume GRH.
  There exist no self-dual $2$-ramified representations $\br:G_\Q\lra \GL_5(\bF_2)$ of length $2$.
\end{corB}
Note that ``of length $2$'' is used here in the representation-theoretic sense, namely that the Galois module has an irreducible submodule and the quotient by it is irreducible.
(This is not to be confused with Moon's notion of $p$-length, which is about finite groups and plays an essential role in the proof of Theorem A.)
We also remark that for each subfield $F$ of $\bF_2$, any $2$-ramified self-dual Galois representation $\br:G_\Q\lra \GL_5(F)$ contains the trivial representation as a direct factor, so ``of length $2$'' is the best we can hope for.

The second flavor of result we present is unconditional and focuses on symplectic representations, starting with the $\GSp_4(\F_2)$-valued case:
\begin{theoremC}
  There exist no representations $\br:G_\Q\lra \GSp_4(\F_2)$ that are irreducible and $2$-ramified.
\end{theoremC}
The proof uses the isomorphism $\GSp_4(\F_2)\simeq S_6$ and associates to $\br$ a rational polynomial of degree at most $6$, approach that may also be of use in the study of Galois representations coming from $2$-torsion points on abelian surfaces, as in \cite{GY}.

In the more general $\GSp_4(\bF_2)$-valued case we show:
\begin{theoremD}
  Let $\br:G_\Q\lra \GSp_4(\bF_2)$ be an irreducible $2$-ramified representation.
  Then $\im(\br)$ is either isomorphic to the Suzuki group $\Sz(\F_{2^r})$ for some odd $r>1$, or is equal to $\Sp_4(\F_{2^s})$ for some $s>1$.

  Hence there exist no irreducible $2$-ramified representations $\br:G_\Q\lra \GSp_4(\bF_2)$ such that $|\im(\br)|<29120$.
\end{theoremD}
We note that the proof may be applicable to representations of $G_K$ for number fields $K\neq\Q$.
The key ingredients are: the classification of maximal subgroups inside $\GSp_4(\bF_2)$, the non-existence results for $2$-dimensional representations over certain quadratic fields, and the Jones--Roberts database of number fields \cite{JR}.
Eventually, the problem is reduced to the case of Theorem C.

Once Theorem D is established, we can use the techniques from the proof of Theorem A to show:
\begin{corE}
  Assume GRH.
  There exist no irreducible $2$-ramified representations $\br:G_\Q\lra \GSp_4(\bF_2)$.
\end{corE}

As an immediate consequence we have:
\begin{corF}
  Let $F$ be a holomorphic Siegel cusp form of degree $2$ and weight $(k_1,k_2)$ with $k_1\ge k_2\geq 1$ corresponding to $\big(\Sym^{k_1-k_2}\otimes\det^{k_2}\big)(\C^2)$.
  Suppose that $F$ is a Hecke eigenform and let $\Q_F$ be the Hecke field of $F$ (the number field generated by the Hecke eigenvalues).
  Suppose also that the cuspidal automorphic representation $\pi_F$ attached to $F$ is unramified outside $2$.
  Let $v$ be a finite place of $\Q_F$ above $2$, let $\F_v$ be the residue field and $\br_{F,v}:G_\Q\lra\GSp_4(\F_v)$ be the semisimplification of the mod $v$ Galois representation attached to $F$.
  We have
  \begin{enumerate}
    \item if $\F_v=\F_2$, then $\br_{F,v}$ is trivial;
    \item if we assume GRH, then $\br_{F,v}$ is trivial.
  \end{enumerate}
\end{corF}
The existence of $\br_{F,v}$ follows from \cite{Wei} when $k_2 \ge 3$ and $F$ is neither a CAP form nor an endoscopic form.
All Siegel holomorphic CAP and endoscopic forms for any weight have been classified (cf. \cite[Sections 3.4 and 3.5]{KWY} for a summary), and one can easily attach $\br_{F,v}$ from the shape of the spinor $L$-function of $F$.
For the lower-weight case ($k_2 \le 2$), we may use the mod $2$ Hasse invariant to raise the weight and thereby apply the former results.

This paper is organized as follows.
After some preparatory work in Section~\ref{prelim}, we build on Moon's methods from \cite{Mo1,Mo2} in Section~\ref{gld} to prove Theorem A, then obtain Corollary B as a consequence in Section~\ref{gl5}.
In Section~\ref{GSp4tri} we consider $\GSp_4(\F_2)$-valued representations and prove Theorem C, and in Section~\ref{GSp4nontri} we extend this to obtain a proof of Theorem D and Corollary E.
The appendix is devoted to detailing the Magma (\cite{BCP}) and SageMath (\cite{Sage}) computations used in this paper; we hope this will facilitate reproducibility.

A crucial ingredient for proving the non-existence of representations, used in many places in this paper as well as in several of the key references, is provided by the discriminant bounds of Odlyzko, Poitou, and Serre, and the related tables of discriminant bounds computed by Odlyzko, see \cite{Od}.
This is also where the dependency on GRH stems from.

\textbf{Acknowledgments.} We would like to thank Miho Aoki for helpful discussions.

\section{Preliminaries}\label{prelim}

\subsection{Number fields of small degree}

The L-functions and modular forms database \cite{LMFDB} includes databases computed by Jones and Roberts \cite{JR} that contain (in particular) all $p$-ramified number fields of degree $n\leq 7$ for $p<102$.
From this we extract:
\begin{prop}[Jones--Roberts]\label{prop:lmfdb}
  The only degrees $n\leq 7$ for which there exist $p$-ramified extensions are:
  \begin{itemize}
    \item for $p=2$: $n\in\{2,4\}$;
    \item for $p=3$: $n\in\{2,3,6\}$.
  \end{itemize}
\end{prop}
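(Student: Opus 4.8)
The plan is to start from the observation that any $p$-ramified field $K$ of degree $n$ has discriminant $d_K=\pm p^{a}$ for some $a\ge 0$, and that ramification theory bounds $a$ effectively: if $\mathfrak P\mid p$ has ramification index $e$, the exponent of $\mathfrak P$ in the different is at most $e-1+e\,v_p(e)$, so summing $f(\mathfrak P)$ times this over the primes $\mathfrak P\mid p$ yields an explicit upper bound $a\le a_{\max}(n,p)$. Consequently every candidate field has root discriminant $|d_K|^{1/n}=p^{a/n}$ confined to a bounded interval, and by Hermite--Minkowski there are only finitely many such $K$ of each degree. The existence half of the statement is then settled by exhibiting fields: $\Q(\zeta_4)$ and $\Q(\zeta_8)$ realize $p=2$ in degrees $2$ and $4$, while $\Q(\zeta_3)$, the cubic subfield of $\Q(\zeta_9)$, and $\Q(\zeta_9)$ itself realize $p=3$ in degrees $2$, $3$ and $6$. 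Each of these is ramified only at $p$.

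For the non-existence half I would first dispatch the degrees for which the ramification bound already forces $p^{a/n}$ to stay below the Odlyzko--Poitou--Serre lower bound for the root discriminant (the GRH-free tables of \cite{Od}), so that no field can exist. For example, a $2$-ramified cubic is at worst tamely ramified with $e=3$ or wildly ramified with $e=2$ at $2$, giving $a\le 3$ and $|d_K|^{1/3}\le 2$; since every cubic field has root discriminant exceeding $2$, none is $2$-ramified. A similar computation settles $p=3$ in degree $5$, where $a\le 6$ yields $|d_K|^{1/5}\le 3^{6/5}<4$, below the degree-$5$ bound. Several of the excluded pairs $(p,n)$ fall away in this manner.

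The remaining cases are the genuine obstacle. Whenever $p$ divides an admissible ramification index, wild ramification inflates $a_{\max}$, and $p^{a_{\max}/n}$ can surpass the degree-$n$ discriminant bound; this occurs for $p=2$ in degrees $5$ and $6$ and for $p=3$ in degrees $4$ and $7$, where (for instance) a quartic with $3\cdot\mathcal O_K=\mathfrak P^{3}\mathfrak Q$ has $|d_K|^{1/4}\le 3^{5/4}\approx 3.95$, comfortably above the lower bound for quartic fields. Here a soft inequality no longer suffices, and one must fall back on the complete tables of $p$-ramified fields compiled by Jones and Roberts and incorporated into \cite{LMFDB}: the discriminant bound confines the search to finitely many defining polynomials, a targeted Hunter-type search (supplemented by class field theory on the solvable strata) is provably exhaustive within that range, and reading off the output shows that no field of the excluded degrees occurs while confirming the ones that do. The crux is therefore the \emph{completeness} of that enumeration rather than any single estimate; granting it, the proposition follows at once.
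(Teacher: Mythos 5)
Your proposal is correct, but be aware that the paper's own ``proof'' of this proposition is pure citation: the statement is attributed to Jones--Roberts and simply read off from their tables \cite{JR}, as incorporated into \cite{LMFDB}, which contain \emph{all} $p$-ramified number fields of degree $n\leq 7$ for $p<102$; no ramification-theoretic argument appears there at all. Your route genuinely differs in that it proves a good part of the statement by hand: the fields $\Q(\zeta_4)$, $\Q(\zeta_8)$, $\Q(\zeta_3)$, the cubic subfield of $\Q(\zeta_9)$, and $\Q(\zeta_9)$ settle the existence half, and the different-exponent bound $e-1+e\,v_p(e)$, combined with unconditional lower bounds for root discriminants (Odlyzko's unconditional tables \cite{Od}, or the known minimal discriminants in degrees $\leq 7$), disposes of several of the excluded degrees, such as $(p,n)=(2,3)$, very much in the spirit of Tate's argument \cite{Ta}. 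What your approach buys is partial independence from the database and an explanation of \emph{why} most degrees are excluded; what the paper's approach buys is brevity, since once the completeness of the Jones--Roberts enumeration must be invoked for even one wild case --- as you do, and as you correctly identify as the crux --- invoking it for all cases costs nothing further. Two small cautions: first, your division into soft and hard cases is sensitive to which lower bound you use --- for instance $(2,7)$ appears to fall to the soft bound, since the wild bound gives $a\leq 14$ and $2^{14/7}=4$ is below the degree-$7$ lower bound, whereas for $(3,5)$ and $(3,7)$ the margins $3^{6/5}\approx 3.74$ and $3^{11/7}\approx 5.62$ sit uncomfortably close to the minimal root discriminants ($\approx 4.38$ in degree $5$, $\approx 5.65$ in degree $7$), so Odlyzko's tables alone may not settle them and the minimal-discriminant tables must be consulted; second, the appeal to Hermite--Minkowski is beside the point, since mere finiteness of the candidate fields yields nothing --- what is needed, and what you rightly flag as the irreducible input, is the provable exhaustiveness of the Hunter-type search underlying the Jones--Roberts tables.
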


As Moon points out in the last paragraph of \cite{Mo2}, this imposes divisibility conditions on the degrees of $p$-ramified extensions:
\begin{corollary}\label{cor:lmfdb}
  There are no $2$-ramified Galois extensions whose degree has prime-to-$2$ part in the set $\{3,5,7\}$.
  There are no $3$-ramified Galois extensions whose degree has prime-to-$3$ part in the set $\{4,5,7\}$.
\end{corollary}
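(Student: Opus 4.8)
The plan is to reduce both assertions to Proposition~\ref{prop:lmfdb} by producing, inside any hypothetical $p$-ramified Galois extension, a subfield whose degree over $\Q$ equals \emph{exactly} the prime-to-$p$ part of the degree, and which remains unramified outside $p$. The mechanism is Sylow theory: the index of a Sylow $p$-subgroup of a finite group is precisely the prime-to-$p$ part of its order, so its fixed field has the degree we want.

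Concretely, suppose for contradiction that $L/\Q$ is a $p$-ramified Galois extension whose degree $n=[L:\Q]$ has prime-to-$p$ part $m$, and write $n=p^{a}m$ with $p\nmid m$. Set $G={\rm Gal}(L/\Q)$ and choose a Sylow $p$-subgroup $P\leq G$, so that $|P|=p^{a}$ and hence $[G:P]=m$. By the Galois correspondence the fixed field $K:=L^{P}$ satisfies $[K:\Q]=[G:P]=m$. Note that $K$ need not be Galois over $\Q$, but this is harmless: Proposition~\ref{prop:lmfdb} concerns arbitrary $p$-ramified number fields of small degree, not only Galois ones. I would then check that $K$ is still unramified outside $p$, which is immediate from multiplicativity of ramification indices in the tower $\Q\subseteq K\subseteq L$: for a prime $\ell\neq p$ and primes $\mathfrak{L}\mid\mathfrak{p}\mid\ell$ in $L,K,\Q$ one has $e(\mathfrak{L}/\ell)=e(\mathfrak{L}/\mathfrak{p})\,e(\mathfrak{p}/\ell)$, and since $\ell$ is unramified in $L/\Q$ by hypothesis, $e(\mathfrak{p}/\ell)=1$. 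Thus $K$ is a $p$-ramified number field of degree $m$.

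Finally I would invoke Proposition~\ref{prop:lmfdb}. For $p=2$, the only degrees $\leq 7$ admitting a $2$-ramified field are $2$ and $4$; since $\{3,5,7\}$ is disjoint from $\{2,4\}$ and consists of integers $\leq 7$, no field $K$ of degree $m\in\{3,5,7\}$ can exist, contradicting the existence of $L$. For $p=3$, the admissible degrees $\leq 7$ are $2,3,6$; as $\{4,5,7\}$ is disjoint from $\{2,3,6\}$ and lies in the range $\leq 7$, we again reach a contradiction. This establishes both statements.

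There is essentially no hard step here; the single point requiring care is the choice of a Sylow $p$-subgroup rather than, say, a cyclic subgroup coming from an element of order dividing $m$. Only the Sylow subgroup has index equal to the \emph{full} prime-to-$p$ part $m$, which is what forces the auxiliary field $K$ to have degree exactly $m\leq 7$ and thereby fall within the range tabulated in Proposition~\ref{prop:lmfdb}.
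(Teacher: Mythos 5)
Your proposal is correct and follows exactly the paper's own argument: pass to the fixed field of a $p$-Sylow subgroup, observe it is $p$-ramified of degree equal to the prime-to-$p$ part, and invoke Proposition~\ref{prop:lmfdb} (which, being based on the Jones--Roberts database of all number fields of degree $\leq 7$, indeed does not require the fixed field to be Galois). The extra details you supply --- multiplicativity of ramification indices and the remark about non-Galois subfields --- are points the paper leaves implicit, but the approach is the same.
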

\begin{proof}
  Let $G$ be the Galois group of a $p$-ramified Galois extension $K/\Q$ of degree $n$, let $P$ be its $p$-Sylow subgroup, and consider the fixed field $K^P$.
  The extension $K^P/\Q$ is $p$-ramified and has degree equal to the prime-to-$p$ part of $n$.
  For $p\in\{2,3\}$, the existence of $K^P/\Q$ is constrained by Proposition \ref{prop:lmfdb}.
\end{proof}

\subsection{Modular representations of some finite groups}

Note that $2$ is not contained in the list of forbidden $3$-free parts of degrees in Corollary \ref{cor:lmfdb}, so we cannot rule it out without further assumptions.
To be able to avoid it in Proposition \ref{prop:length1}, we use the following consequence of Brauer's Theorem on the independence of modular characters: if $G$ is a finite group and $p$ is a prime number, the number of (isomorphism classes of) irreducible $\bF_p$-representations of $G$ is equal to the number of conjugacy classes of elements of order coprime to $p$ in $G$.
(See for instance \cite[Corollary 3 to Theorem 42, Section 18.2]{Se2}.)
A well-known immediate implication is that every irreducible $\bF_p$-representation of a $p$-group is trivial.
A more obscure consequence that we will use is:
\begin{lemma}\label{lem:gen_dihedral}
  Let $p$ be an odd prime, $m\geq 1$ an integer, and consider the generalized dihedral group $G=(\Z/p\Z)^m \rtimes (\Z/2\Z)$.
  There are two irreducible $\bF_p$-representations of $G$, and both of them are one-dimensional.
\end{lemma}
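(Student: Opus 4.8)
The plan is to combine the Brauer count recalled just above with the explicit exhibition of two evident one-dimensional representations. Write $A=(\Z/p\Z)^m$ for the abelian normal subgroup and let $\tau$ denote the generator of the $\Z/2\Z$ factor, which acts on $A$ by inversion $a\mapsto -a$; thus $|G|=2p^m$ and every element of $G$ has the form $(a,1)$ or $(a,\tau)$ with $a\in A$.

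First I would count the conjugacy classes of elements of order coprime to $p$. Since $|G|=2p^m$, the order of any element is of the form $2^{\epsilon}p^{\delta}$, so being coprime to $p$ forces the order to be $1$ or $2$. The identity accounts for one class. As $A$ has odd order it contains no involution, so every involution lies in the coset $A\tau$; and indeed each $(a,\tau)$ squares to $(a+\tau(a),1)=(a-a,1)=(0,1)$, so the $p^m$ elements of $A\tau$ are exactly the involutions. Conjugating by $(w,1)$ sends $(a,\tau)$ to $(a+2w,\tau)$, and since $p$ is odd the element $2$ is invertible in $\Z/p\Z$, so $2w$ ranges over all of $A$. Hence all involutions are conjugate, forming a single class. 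There are therefore exactly two conjugacy classes of elements of order coprime to $p$, and by Brauer's theorem exactly two irreducible $\bF_p$-representations of $G$.

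It remains to identify these two representations, and here I would simply write them down. The trivial representation is one of them. For the other, note that $G/A\simeq\Z/2\Z$ and, because $p$ is odd, the quotient map followed by the isomorphism $\Z/2\Z\simeq\{\pm 1\}\subseteq\bF_p^{\times}$ defines a nontrivial one-dimensional character $\sgn\colon G\to\bF_p^{\times}$. The trivial and sign characters are distinct irreducible one-dimensional $\bF_p$-representations, so by the count above they already exhaust the irreducible $\bF_p$-representations of $G$; in particular both are one-dimensional, as claimed.

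The argument is essentially elementary, so there is no serious obstacle; the only point requiring care is the conjugacy computation, where one must use that $2$ is a unit modulo the odd prime $p$ to collapse all involutions into a single class (this is precisely where the hypothesis that $p$ is odd enters, both in the count and in the non-triviality of the sign character). As an alternative to invoking the Brauer count for the dimension claim, one could argue directly: restricting an irreducible $V$ to the normal $p$-subgroup $A$, the fixed space $V^{A}$ is nonzero because $A$ is a $p$-group acting in characteristic $p$, and it is $G$-stable because $A$ is normal, forcing $V^{A}=V$; thus $A$ acts trivially and $V$ descends to an irreducible representation of $G/A\simeq\Z/2\Z$, which over $\bF_p$ is one-dimensional.
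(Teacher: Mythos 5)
Your proof is correct and follows essentially the same route as the paper: both exhibit the trivial and sign characters via the quotient $G\to\Z/2\Z$ and then invoke the Brauer count of conjugacy classes of elements of order coprime to $p$, which you carry out with slightly more care (your explicit computation $(w,1)(a,\tau)(w,1)^{-1}=(a+2w,\tau)$, using that $2$ is a unit mod the odd prime $p$, justifies the paper's unproved assertion that the $p^m$ involutions form a single class). Your closing alternative --- that the fixed space $V^{A}$ of the normal $p$-subgroup is nonzero and $G$-stable, so every irreducible factors through $G/A\simeq\Z/2\Z$ --- is also valid and would in fact prove the whole lemma without Brauer's theorem at all.
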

\begin{proof}
  We get two irreducible one-dimensional $\bF_p$-representations by composing the canonical surjection $G\to (\Z/2\Z)$ with the two characters of $\Z/2\Z$ given by $1\mapsto \pm 1$.

  To see that this exhausts the irreducible $\bF_p$-representations of $G$, we have to count the number of conjugacy classes of elements of order coprime to $p$.
  But $G$ has
  \begin{itemize}
    \item $1$ conjugacy class of the identity element;
    \item $1$ conjugacy class consisting of all $p^m$ elements of order $2$;
    \item $(p^m-1)/2$ conjugacy classes of elements of order $p$ (each of these classes has cardinality two).
  \end{itemize}
  Therefore Brauer's result gives us precisely two irreducible $\bF_p$-representations.
\end{proof}
\begin{remark}\label{rem:dihedral}
  The same argument and conclusion apply to the case of irreducible $\bF_p$-representations of the dihedral group $D_n=(\Z/n\Z)\rtimes (\Z/2\Z)$ if $n$ is a power of the odd prime $p$.
\end{remark}

\begin{corollary}\label{cor:gen_dihedral}
  Suppose $d\geq 2$.
  \begin{enumerate}[(a)]
    \item There are no irreducible representations $\br\colon G_\Q\lra\GL_d(\bF_p)$ such that the Galois group of $\Q_{\br}$ is a $p$-group.
    \item For $p$ odd, there are no irreducible representations $\br\colon G_\Q\lra\GL_d(\bF_p)$ that are $p$-ramified and such that $\Gal(\Q_{\br}/\Q)\simeq (\Z/p\Z)^m\rtimes(\Z/2\Z)$.
  \end{enumerate}
\end{corollary}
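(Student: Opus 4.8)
The plan is to prove both parts by combining the representation-theoretic counting from Lemma~\ref{lem:gen_dihedral} (and the Brauer-character fact preceding it) with the irreducibility hypothesis. For part (a), the key observation is the fact stated just before Lemma~\ref{lem:gen_dihedral}: every irreducible $\bF_p$-representation of a $p$-group is trivial, hence one-dimensional. So if $\br$ is irreducible and the Galois group $G$ of $\Q_{\br}$ is a $p$-group, then $\br$ factors through $G$ as a faithful (by definition of $\Q_{\br}$) irreducible $\bF_p$-representation of $G$; but any such representation is one-dimensional, contradicting $d\geq 2$. I would spell out that $\br$ descends to a representation of $G=\mathrm{Gal}(\Q_{\br}/\Q)$ because $\Q_{\br}=\bQ^{\mathrm{Ker}(\br)}$ is by construction the fixed field of the kernel, so $\mathrm{Im}(\br)\simeq G$ and the resulting representation of $G$ is both faithful and irreducible.

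For part (b), the structure is the same but now the relevant counting input is Lemma~\ref{lem:gen_dihedral} itself: with $p$ odd and $G\simeq(\Z/p\Z)^m\rtimes(\Z/2\Z)$ the generalized dihedral group, the lemma tells us there are exactly two irreducible $\bF_p$-representations of $G$, both one-dimensional. Again $\br$ descends to a faithful irreducible $\bF_p$-representation of $G$, which must therefore be one of these two one-dimensional representations; this forces $d=1$, contradicting $d\geq 2$. The $p$-ramified and odd-$p$ hypotheses are exactly the conditions under which this generalized dihedral group arises as a plausible image (they justify why this is the case one wants to exclude), but the logical core of the argument only uses that every irreducible $\bF_p$-representation of such a $G$ is one-dimensional.

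The only genuine subtlety, and the step I would state carefully rather than a real obstacle, is the descent from a representation of $G_\Q$ to a representation of the finite quotient $G=\mathrm{Gal}(\Q_{\br}/\Q)$, together with the observation that faithfulness of the induced representation of $G$ is automatic and that irreducibility of $\br$ as a $G_\Q$-representation is equivalent to irreducibility of the induced $G$-representation (both amount to the absence of a proper nonzero $\mathrm{Im}(\br)$-stable subspace). Once this identification is in place, both parts are immediate consequences of the one-dimensionality of all irreducible $\bF_p$-representations of the respective groups. I would therefore present the proof as two short paragraphs, each ending in the contradiction with $d\geq 2$.
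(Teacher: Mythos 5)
Your proposal is correct and follows exactly the paper's own argument: part (a) from the fact that all irreducible $\bF_p$-representations of a $p$-group are trivial, and part (b) from Lemma~\ref{lem:gen_dihedral}, in each case contradicting $d\geq 2$. The paper's proof is just a terser version of yours; the descent to the finite quotient $\mathrm{Gal}(\Q_{\br}/\Q)$ that you spell out is left implicit there, and spelling it out is a fine (if routine) addition.
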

\begin{proof}
  Part (a) follows from the already mentioned fact that a $p$-group has no non-trivial irreducible $\bF_p$-representations.

  Part (b) follows directly from Lemma \ref{lem:gen_dihedral}.
\end{proof}

\subsection{Discriminant upper bounds from \texorpdfstring{$p$}{p}-Sylow structure of Galois groups}

Fix a prime $p$ and let $P$ be a $p$-group (possibly trivial).
The \emph{$p$-length of $P$} is the minimal length of a subnormal series with elementary abelian quotients: in other words, it is the smallest integer $N$ for which there exists a series
\begin{equation}\label{elem_series_1}
  P=P^{(1)}\supsetneq P^{(2)}\supsetneq\dots\supsetneq P^{(N)}\supsetneq P^{(N+1)}=\{1\}
\end{equation}
such that for all $i=1,\dots,N$:
\begin{equation*}
  P^{(i+1)} \text{ is a normal subgroup of } P^{(i)} \text{ and } P^{(i+1)}/P^{(i)}\cong (\Z/p\Z)^{m_i} \text{ for some $m_i$}.
\end{equation*}
We will need a few very simple facts:
\begin{itemize}
  \item $P$ has $p$-length $0$ if and only if it is trivial.
  \item $P$ has $p$-length $1$ if and only if it is elementary abelian.
  \item If $P$ is an abelian $p$-group of $p$-length $N$, then the exponent of $P$ is $p^N$.
  \item Both non-abelian groups of order $p^3$ have $p$-length $2$.
\end{itemize}

Note also that, as Jones observes in \cite[Section 2.2]{Jo}, a shortest series of the type \eqref{elem_series_1} is given by what he calls the Frattini filtration of $P$:
\begin{equation*}
  P\supsetneq \Phi(P)\supsetneq \Phi^2(P)\dots\supsetneq \Phi^N(P)=\{1\},
\end{equation*}
where for any group $H$, $\Phi(H)$ denotes the Frattini subgroup of $H$ (that is, the intersection of all maximal subgroups of $H$).
Therefore the $p$-length $N$ of $P$ is the smallest integer such that $\Phi^N(P)$ is the trivial group.

Given a finite group $G$, we define the \emph{$p$-length of $G$} to be the $p$-length of a $p$-Sylow subgroup of $G$.
(If $p$ does not divide the order of $G$, we say that $G$ has $p$-length $0$.)

Let $K/\Q$ be a $p$-ramified, degree $n$ Galois extension.
Let $N$ denote the $p$-length of the Galois group $G$ of $K/\Q$.
Moon (\cite[Proposition 2.4]{Mo1}) gives the following upper bound for the discriminant of $K$:
\begin{equation*}
  d_K \mid p^{Cn},\quad\text{where $C$ satisfies}\quad C<N+1+\frac{N}{p-1}.
\end{equation*}
Therefore the root discriminant of $K$ satisfies $|d_K|^{1/n}\leq p^C$.

We tighten Moon's upper bound by going back to a more precise expression for $C$ from \cite[Lemma 2.3]{Mo1}.
We reformulate it as:
\begin{lemma}\label{lem:exact_C}
  Let $F/\Q_p$ be a finite unramified extension.
  Let $E/F$ be a totally ramified extension with Galois group $I$.
  Suppose $I$ has $p$-length $N$ and fix a series of the form \eqref{elem_series_1} for the $p$-Sylow subgroup $P$ of $I$, with corresponding positive integers $m_1,\dots,m_N$.
  Let $e_0=[I\colon P]$, let $r$ be the remainder of the division of $e_0$ by $p-1$, and let $e_N$ be the order of $I$.
  Then the different of $E/F$ divides $p^C$, where
  \begin{equation*}
    C=N+1+\frac{N}{p-1}-\left(\frac{1}{e_N}+\frac{1}{p-1}\sum_{i=1}^N \frac{1}{p^{m_i-1}}+\frac{r}{p-1}\Big(\frac{1}{e_0}-\frac{1}{e_N}\Big)\right).
  \end{equation*}
\end{lemma}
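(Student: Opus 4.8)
The plan is to reduce the statement to a bound on the different exponent and then estimate the upper ramification breaks of $E/F$ layer by layer along the filtration \eqref{elem_series_1}. Since $F/\Q_p$ is unramified we have $v_F(p)=1$, so, writing $\mathfrak{d}_{E/F}$ for the different and $v_F$ for the normalized valuation of $F$, the assertion ``$\mathfrak{d}_{E/F}$ divides $p^C$'' is equivalent to the inequality $v_F(\mathfrak{d}_{E/F})\le C$. Because $E/F$ is totally ramified and Galois, its Galois group $I$ equals its own inertia group $G_0$, the $p$-Sylow subgroup $P$ is the wild inertia $G_1$, and $I/P$ is cyclic of order $e_0$ coprime to $p$. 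I would take as starting point the classical formula for the different exponent in upper numbering (Serre, \emph{Local Fields}, Ch.~VI),
\begin{equation*}
  v_F(\mathfrak{d}_{E/F})=\int_{-1}^{\infty}\Big(1-\frac{1}{|G^u|}\Big)\,du ,
\end{equation*}
which converts the problem into controlling how far out the breaks of the filtration $\{G^u\}_{u\ge -1}$ can lie.

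On the segment $[-1,0]$ one has $G^u=I$, so this piece contributes exactly $1-\tfrac1{e_N}$; this accounts for the term $-\tfrac1{e_N}$ in the refinement, and replacing $1-\tfrac1{e_N}$ by $1$ is precisely the coarsening that gives back Moon's bound $C<N+1+\tfrac{N}{p-1}$. For $u>0$ the group $G^u$ descends through the wild inertia $P=G^{0^+}$ and its ramification subgroups, and here the $p$-length hypothesis enters: fixing the chain \eqref{elem_series_1} and its fixed fields $F\subset L_1\subset\cdots\subset L_{N+1}=E$ (with $L_1=E^{P}$ tame of degree $e_0$ and each $L_{i+1}/L_i$ elementary abelian of type $(\Z/p)^{m_i}$), I would bound the top break contributed by each wild layer using the standard maximal conductor estimate, namely that a $\Z/p$-extension of a local field $K$ with $v_K(p)=e_K$ has upper break at most $\tfrac{p}{p-1}e_K$, together with Hasse--Arf to force the breaks of the abelian layers to be integers. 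Transporting these bounds up the tower by means of the Herbrand function $\psi$ shows that each layer raises the top break by at most $1+\tfrac1{p-1}$, producing the leading $N+\tfrac{N}{p-1}$; the fact that the $i$-th layer has rank $m_i$ rather than $1$ improves the $1-1/|G^u|$ correction on that layer and is what records the residual term $\tfrac1{p-1}\sum_{i=1}^N p^{-(m_i-1)}$.

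The delicate bookkeeping, and the place where I expect to have to be most careful, is the tame correction. Over the base $L_1$ one has $v_{L_1}(p)=e_0$, so the integral maximal break of the first wild layer is $\big\lfloor\tfrac{p}{p-1}e_0\big\rfloor$ rather than $\tfrac{p}{p-1}e_0$; writing $e_0=(p-1)q+r$ with $0\le r<p-1$ isolates this rounding deficit as $\tfrac{r}{p-1}$, and propagating it through the tower, where each successive ramification index rescales the base valuation, turns it into the term $\tfrac{r}{p-1}\Big(\tfrac1{e_0}-\tfrac1{e_N}\Big)$. Note that when $p=2$ we have $p-1=1$, hence $r=0$ and this correction disappears, consistent with the fact that all breaks are automatically integral in that case. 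Assembling the tame contribution $1-\tfrac1{e_N}$, the wild leading term $N+\tfrac{N}{p-1}$, and the two correction terms, and simplifying, yields exactly the stated value of $C$; since every break bound used is a worst-case estimate, the resulting inequality gives divisibility by $p^C$ and need not be sharp.

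The main obstacle is this constant-chasing: keeping the integrality (floor) corrections aligned through the Herbrand transformation across all $N$ layers, so that the separate rounding contributions telescope into the single clean term involving $r$ instead of accumulating into an unmanageable error. Concretely I would follow the computation behind \cite[Lemma~2.3]{Mo1}, retaining the exact value of $C$ throughout rather than bounding it only at the end.
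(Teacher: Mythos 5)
Your outline is correct and, if carried out, does produce exactly the stated value of $C$: decomposing the different along the tower $F\subset L_1\subset\cdots\subset L_{N+1}=E$ attached to \eqref{elem_series_1}, bounding each elementary abelian layer via the conductor--discriminant formula with the integral (Hasse--Arf) break bound $\big\lfloor pe_{i-1}/(p-1)\big\rfloor$, and telescoping the resulting $e$-terms recovers the formula of Lemma \ref{lem:exact_C}. But this is a genuinely different, and much heavier, route than the paper's. The paper does no ramification theory at all: it quotes the \emph{exact} expression already proved in \cite[Lemma 2.3]{Mo1},
\begin{equation*}
  C = N+1+\sum_{i=1}^N \frac{\alpha_i-1}{e_{i-1}} - \Big(\sum_{i=1}^N \frac{1}{p^{m_i}} + \sum_{i=1}^{N-1} \frac{\alpha_i-1}{e_i} + \frac{\alpha_N}{e_N}\Big),
  \qquad \alpha_i=\Big\lfloor\frac{e_{i-1}}{p-1}\Big\rfloor+1,
\end{equation*}
and reduces the lemma to algebraic manipulation. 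You propose instead to reprove Moon's lemma from first principles (the integral formula for the different exponent, conductor bounds, Hasse--Arf, Herbrand transport) while retaining exact constants. What the citation buys the paper is precisely that all the ``delicate bookkeeping'' you flag in your last paragraph has already been done by Moon; what your route buys is self-containedness and a clear conceptual picture of where each term of $C$ comes from.

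One step that your sketch leaves implicit must be made explicit for the telescoping to work: at \emph{every} wild layer, not only the first, the rounding deficit equals the same $\tfrac{r}{p-1}$, because $e_{i-1}=e_0p^{m_1+\cdots+m_{i-1}}\equiv e_0\equiv r\pmod{p-1}$ (as $p\equiv 1\pmod{p-1}$). This single arithmetic observation is in fact the entire content of the paper's proof: it yields $\alpha_i-1=(e_{i-1}-r)/(p-1)$ for all $i$, after which Moon's expression collapses to the stated $C$. Without it, the per-layer deficits could be distinct remainders $r_i/(p-1)$ and would not assemble into the single term $\tfrac{r}{p-1}\big(\tfrac{1}{e_0}-\tfrac{1}{e_N}\big)$; with it stated, your plan is sound.
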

\begin{proof}
  The expression from \cite[Lemma 2.3]{Mo1} is
  \begin{equation*}
    C = N+1+\sum_{i=1}^N \frac{\alpha_i-1}{e_{i-1}} - \left(\sum_{i=1}^N \frac{1}{p^{m_i}} + \sum_{i=1}^{N-1} \frac{\alpha_i-1}{e_i} + \frac{\alpha_N}{e_N}\right),
  \end{equation*}
  where
  \begin{equation*}
    e_i = e_0 p^{m_1}\dots p^{m_i},\quad \alpha_i = \left\lfloor \frac{e_{i-1}}{p-1}\right\rfloor + 1\quad \text{for }i=1,\dots,N.
  \end{equation*}
  We observe that since $p\equiv 1\pmod{p-1}$, the integer $r$ is also the remainder of the division of $e_i$ by $(p-1)$ for all $i=1,\dots,N$, so that
  \begin{equation*}
    \alpha_i-1=\frac{e_{i-1}-r}{p-1}\quad\text{for all }i=1,\dots,N.
  \end{equation*}
  Our claim follows from this and some straightforward algebraic manipulations of Moon's formula.
\end{proof}

We can now state a tighter version of \cite[Proposition 2.4]{Mo1}:
\begin{prop}\label{prop:upper_bound_C}
  Let $K/\Q$ be a finite Galois extension of degree $n$, such that the Galois group $G$ has $p$-length $N$.
  Then the $p$-part of the discriminant of $K/\Q$ divides $p^{Cn}$, where
  \begin{equation*}
    C < N + 1 + \frac{N}{p-1} - \left(\frac{1}{n}+\frac{1}{p-1}\sum_{i=1}^N \frac{1}{p^{m_i-1}}\right).
  \end{equation*}
\end{prop}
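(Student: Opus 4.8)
The plan is to reduce the global discriminant bound to the local different computed in Lemma~\ref{lem:exact_C}, applied to an inertia group at $p$, and then to trade the local invariants of that inertia group for the $p$-length data of $G$ by a purely group-theoretic comparison. First I localize: since $K/\Q$ is Galois, all primes $\mathfrak P\mid p$ have a common ramification index $e$, residue degree $f$, and local different exponent $\delta$, with $efg=n$ for $g$ the number of such primes, so $v_p(d_{K/\Q})=gf\delta$. Normalizing the valuation $v$ on $K_{\mathfrak P}$ by $v(p)=1$ gives $\delta=e\,v(\mathfrak d_{K_{\mathfrak P}/\Q_p})$, whence $v_p(d_{K/\Q})=n\,v(\mathfrak d_{K_{\mathfrak P}/\Q_p})$; it therefore suffices to bound $v(\mathfrak d_{K_{\mathfrak P}/\Q_p})$ by the asserted $C$.

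Let $I\subseteq G$ be the inertia group at $\mathfrak P$, let $F=K_{\mathfrak P}^{I}$ be the (unramified) inertia field, and put $E=K_{\mathfrak P}$. Then $E/F$ is totally ramified with Galois group $I$, and since $\mathfrak d_{F/\Q_p}=(1)$, multiplicativity of differents gives $\mathfrak d_{K_{\mathfrak P}/\Q_p}=\mathfrak d_{E/F}$. Applying Lemma~\ref{lem:exact_C} to $E/F$ with the Frattini filtration of a $p$-Sylow $P_I$ of $I$ (so $e_N=|I|=e\le n$) and discarding the nonnegative last term $\tfrac{r}{p-1}\bigl(\tfrac1{e_0}-\tfrac1{e_N}\bigr)$ yields
\[
v(\mathfrak d_{E/F})\ \le\ N_I+1+\frac{N_I}{p-1}-\frac1e-\frac1{p-1}\sum_{i=1}^{N_I}\frac1{p^{\,m_i^{I}-1}},
\]
where $N_I$ is the $p$-length of $I$ and the $m_i^{I}$ are the ranks of the Frattini layers of $P_I$. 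Writing $g(m)=p-p^{1-m}$, which is increasing and concave with $g(0)=0$, and $\Theta(P)=\tfrac1{p-1}\sum_i g(m_i)$ over the Frattini layers of $P$, this bound reads $v(\mathfrak d_{E/F})\le 1+\Theta(P_I)-\tfrac1e$, while the proposition's right-hand side is exactly $1+\Theta(P_G)-\tfrac1n$ for a $p$-Sylow $P_G$ of $G$. As $e\le n$, everything reduces to the group-theoretic inequality $\Theta(P_I)\le\Theta(P_G)$ for a chosen inclusion $P_I\subseteq P_G$.

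I would prove $\Theta(P_I)\le\Theta(P_G)$ in three moves. First, monotonicity of the Frattini subgroup on $p$-groups gives $\Phi^{i}(P_I)\subseteq\Phi^{i}(P_G)$ for all $i$, and in particular $N_I\le N$. Second, the chain $Q^{(i)}=\Phi^{i-1}(P_G)\cap P_I$ is an elementary-abelian series of $P_I$ whose $i$-th quotient embeds into $\Phi^{i-1}(P_G)/\Phi^{i}(P_G)$, hence has rank $\mu_i\le m_i$, so that $\sum_i g(\mu_i)\le\sum_i g(m_i)=(p-1)\Theta(P_G)$. Third, among all elementary-abelian series of $P_I$ the Frattini filtration minimizes the sum of $g$ over the ranks of the successive layers: any such series satisfies $P^{(i+1)}\supseteq\Phi^{i}(P_I)$, so its partial layer-sums $\log_p[P_I:P^{(i+1)}]$ are dominated by those of the Frattini filtration, and a Karamata-type inequality using the concavity of $g$ (equivalently its subadditivity, since $(1-p^{-a})(1-p^{-b})\ge0$) gives $(p-1)\Theta(P_I)=\sum_i g(m_i^{I})\le\sum_i g(\mu_i)$. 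Chaining the second and third moves gives $\Theta(P_I)\le\Theta(P_G)$, hence $v(\mathfrak d_{E/F})\le 1+\Theta(P_G)-\tfrac1n$ and $v_p(d_{K/\Q})=n\,v(\mathfrak d_{E/F})$, which is the assertion; the strict inequality in the statement is the effect of having discarded the (generically positive) final term of Lemma~\ref{lem:exact_C}.

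The hard part will be the third move, a genuine majorization inequality rather than formal bookkeeping. It is forced on us precisely because the inertia group $I$ is in general a proper subgroup of $G$, so the local invariants $N_I$ and $\{m_i^{I}\}$ attached to the different must be converted into the global invariants $N$ and $\{m_i\}$ appearing in the statement; were $I=G$ the estimate would be immediate from Lemma~\ref{lem:exact_C}.
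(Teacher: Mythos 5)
Your localization and the application of Lemma~\ref{lem:exact_C} are sound and agree with the paper: writing $v_p(d_{K/\Q})=n\,v(\mathfrak{d}_{E/F})$, applying the lemma to the inertia group $I$, discarding the $r$-term, and replacing $1/e_N$ by $1/n$ is exactly the paper's (i.e.\ Moon's) argument. The gap is your third move, and it is fatal: it is \emph{false} that the Frattini filtration of a $p$-group minimizes $\sum_i g(\mathrm{rank}_i)$ over all elementary abelian series. Take $p=2$ and $P_I=(\Z/2\Z)^2\rtimes\Z/4\Z$ of order $16$, where the generator $c$ of $\Z/4\Z$ acts by swapping the coordinates. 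This group is $2$-generated with $\Phi(P_I)=\langle (1,1),c^2\rangle\cong(\Z/2\Z)^2$, so its Frattini filtration has ranks $(2,2)$ and $g$-sum $2g(2)=3$; but $A=(\Z/2\Z)^2\times\langle c^2\rangle\cong(\Z/2\Z)^3$ is normal of index $2$, giving an elementary abelian series with ranks $(1,3)$ and $g$-sum $g(1)+g(3)=11/4<3$. Your Karamata argument cannot repair this: majorization requires \emph{sorted} sequences, and although the Frattini partial sums dominate in filtration order ($2\geq 1$), after sorting it is $(3,1)$ that majorizes $(2,2)$, so concavity of $g$ yields precisely the opposite inequality. (Subadditivity of $g$ does show that \emph{refining} a series increases the $g$-sum, but an elementary abelian series need not be a refinement of the Frattini filtration; the containments $P^{(i+1)}\supseteq\Phi^{i}(P_I)$ do not make it one, as the example shows.) Hence the chain $\sum_i g(m_i^{I})\leq\sum_i g(\mu_i)\leq\sum_i g(m_i)$ breaks at its first link, and your key inequality $\Theta(P_I)\leq\Theta(P_G)$ is left unproven; it is at best a delicate fact about $p$-groups, not formal bookkeeping.

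The root of the problem is that you are proving a stronger statement than the paper's. In Proposition~\ref{prop:upper_bound_C} the integers $m_i$ are not the Frattini ranks of a $p$-Sylow subgroup of $G$: they are the invariants of the inertia group $I$ carried over from Lemma~\ref{lem:exact_C}, and the paper's proof is literally Moon's localization plus the two observations $e_N\mid n$ and $r\geq 0$; no comparison between the filtration of $I$ and a filtration of $G$ is made or needed in the proposition itself. The only global ingredient is $N_I\leq N$ (monotonicity of Frattini length), which costs nothing because each added level changes the bound by $p/(p-1)-p^{1-m}/(p-1)\geq 1>0$, so passing from $N_I$ to $N$ only weakens it; the reconciliation with $v_p(n)$ is deferred to the applications, where the parenthesized quantity is minimized over all numerically possible $(m_i)$. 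Your reading of the statement forces exactly the monotonicity assertion $\Theta(P_I)\leq\Theta(P_G)$ that your argument fails to deliver, which is why your ``hard part'' does not appear in the paper at all.
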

\begin{proof}
  Identical to the proof of \cite[Proposition 2.4]{Mo1} but using the expression in Lemma \ref{lem:exact_C} and the facts that $e_N\mid n$ and $r\geq 0$.
\end{proof}
(Note that Jones describes in \cite[Proposition 2.3]{Jo} a very similar reformulation of Moon's result.)

Here is the strategy we are going to employ: \emph{a priori} we have no control over the quantity in the parenthesis in Proposition \ref{prop:upper_bound_C}, so we simply use
\begin{equation*}
  C<N+1+N/(p-1).
\end{equation*}
This gives us a bound on $|d_K|^{1/n}$, hopefully small enough to appear in Odlyzko's tables.
The latter give us an upper bound on $n$, say $n\leq n_{\text{max}}$.
At this point we return to the expression in Proposition \ref{prop:upper_bound_C} and minimize the quantity
\begin{equation*}
  \frac{1}{n}+\frac{1}{p-1}\sum_{i=1}^N \frac{1}{p^{m_i-1}}
\end{equation*}
under the additional constraint $n\leq n_{\text{max}}$ (together with other arithmetic constraints).
This results in a smaller upper bound for $C$, hence a smaller upper bound on $|d_K|^{1/n}$, and we can start the process over.
In the next section we describe how taking these successive minima over increasingly smaller sets allows us to rule out the existence of the number field $K$.

\section{Non-existence of \texorpdfstring{$\GL_d(\bF_p)$}{GLdFpbar}-valued representations (Thm A)}\label{gld}

Let $d\geq 2$.

Given an irreducible $p$-ramified Galois representation $\br:G_\Q\lra \GL_d(\bF_p)$, we let $K$ be the field fixed by the kernel of $\br$ and $G$ be the Galois group of $K/\Q$.
We also let $n=[K:\Q]=|G|$.

We gather some consequences of results of Harbater, Hoelscher, and Pollak regarding divisibility by powers of $p$:
\begin{prop}\label{prop:divisibility}
  With the above assumptions:
  \begin{enumerate}[(a)]
    \item if $p=2$, then $16$ divides $n$;
    \item assuming GRH, if $p=3$, then $18$ or $27$ divides $n$.
  \end{enumerate}
\end{prop}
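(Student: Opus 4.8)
The plan is to separate the purely group-theoretic reductions from the arithmetic input, and to feed the latter into the discriminant machinery of Proposition \ref{prop:upper_bound_C}. First I would record that $G=\mathrm{Im}(\br)$ is \emph{not} a $p$-group: since $\br$ is irreducible of dimension $d\geq 2$, Corollary \ref{cor:gen_dihedral}(a) rules this out. Hence the $p$-Sylow subgroup $P\leq G$ is proper, and the prime-to-$p$ part $m=[G:P]$ satisfies $m>1$. Because $K/\Q$ is $p$-ramified and Galois, Corollary \ref{cor:lmfdb} applies directly to it and forbids $m\in\{3,5,7\}$ when $p=2$, and $m\in\{4,5,7\}$ when $p=3$. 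This reduction concentrates all the difficulty in controlling the size $|P|$ of the $p$-part of $n$.

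For that control I would invoke the results of Harbater, Hoelscher, and Pollak on $p$-ramified extensions. Two structural facts drive the mechanism: $\Q$ has no nontrivial everywhere-unramified extension, so the inertia subgroups above $p$ generate $G$; and $G^{\mathrm{ab}}$, being the Galois group of an abelian extension unramified outside $p$, is a quotient of $\mathrm{Gal}(\Q(\zeta_{p^\infty})/\Q)\cong\Z_p^\times$, hence essentially a $p$-group (its prime-to-$p$ part is a quotient of $\Z/(p-1)$, which is $\Z/2$ for $p=3$ and trivial for $p=2$). In particular the nontrivial prime-to-$p$ part $m$ must live in $[G,G]$ and be supported by the wild inertia filtration at $p$; tracking this in the local ramification computation is what forces $|P|$ to be large, yielding $16\mid n$ for $p=2$ and $9\mid n$ for $p=3$.

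For part (b) the remaining task is the dichotomy $18\mid n$ versus $27\mid n$, that is, showing that if the $3$-part of $n$ equals exactly $9$ then $n$ must be even. Here I would use the $\Z/2\times\Z_3$ shape of $G^{\mathrm{ab}}$ together with Corollary \ref{cor:lmfdb} to restrict the prime-to-$3$ part, and then eliminate the surviving configurations by the successive-minima strategy following Proposition \ref{prop:upper_bound_C}: a small $3$-part forces a small $3$-length $N$, hence a small bound on $|d_K|^{1/n}\leq 3^C$, and Odlyzko's tables \cite{Od} then bound $n$. For $p=3$ these discriminant bounds are only sharp enough under GRH, which is precisely where the hypothesis enters; for $p=2$ the unconditional bounds already suffice, which is why part (a) is stated without GRH.

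The hard part is not the reductions but pinning $|P|$ down exactly. The crude estimate $C<N+1+\tfrac{N}{p-1}$ is too weak to exclude the borderline cases with small $p$-part, so one must run the refined minimization of $\tfrac1n+\tfrac{1}{p-1}\sum_{i=1}^N p^{-(m_i-1)}$ from Proposition \ref{prop:upper_bound_C} over the increasingly constrained sets of admissible data $(n,N,\{m_i\})$, iterating against Odlyzko's tables until the small-$p$-part possibilities are ruled out. Keeping this bookkeeping tight, and conditional on GRH in the $p=3$ case, is the crux of the argument.
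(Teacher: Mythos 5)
Part (a) of your proposal is essentially the paper's argument: Corollary \ref{cor:gen_dihedral}(a) excludes the possibility that $G$ is a $2$-group, and Harbater's theorem \cite[Theorem 2.23]{Ha} (which you invoke, though phrased as a ramification mechanism to be re-derived rather than as a citation) then gives $16\mid n$. That part is fine.

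Part (b) has a genuine gap. The paper proves the dichotomy by splitting on solvability of $G$: if $G$ is solvable, Hoelscher's structural result \cite[Corollary D]{Ho} gives that $27\mid n$, or $G$ is cyclic (excluded by irreducibility), or $G/p(G)\simeq\Z/2\Z$ (whence $n$ is even, and $18\mid n$ using $9\mid n$ from \cite[Corollary E]{Ho}); if $G$ is non-solvable, Pollak's bound $n\geq 660$ \cite[Theorem 2.10]{Po}, the GRH Odlyzko lower bound $|d_K|^{1/n}\geq 27.328$, and Neukirch's upper bound $|d_K|^{1/n}<3^{1+v_3(n)}$ together force $27\mid n$. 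Your replacement --- restricting the prime-to-$3$ part via $G^{\mathrm{ab}}$ and Corollary \ref{cor:lmfdb}, then running successive minima against Odlyzko's tables --- cannot complete. Concretely, take $G$ solvable of \emph{odd} order $n$ with $3$-Sylow subgroup $\Z/9\Z$, so $3$-length $2$ and $m_1=m_2=1$. Proposition \ref{prop:upper_bound_C} gives $C<3-1/n$, i.e.\ $|d_K|^{1/n}<27$, and under GRH Odlyzko's Table 1 only yields a degree bound of several hundred; the candidates $n=9m$ with $m$ odd and $m\notin\{1,5,7\}$ (so $n=99,117,153,\dots$) all survive Corollary \ref{cor:lmfdb}, and the iteration stalls, because with $m_1=m_2=1$ forced the minimized quantity is $1/n+1$, so $C$ stays just below $3$ and the bound $|d_K|^{1/n}<27$ never improves enough to eliminate them. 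Knowing that $G^{\mathrm{ab}}$ is a quotient of $\Z/2\Z\times\Z_3$ does not exclude such $G$ either (solvable groups with abelianization $\Z/3\Z$ and large commutator subgroup are plentiful); what is needed here is precisely Hoelscher's Corollary D, which you never invoke. A secondary issue: your opening step (``small $3$-part $\Rightarrow$ small $N$ $\Rightarrow$ $|d_K|^{1/n}<3^C$, Odlyzko bounds $n$'') is vacuous for general $K$ if one uses the crude bound $C<4$, since $|d_K|^{1/n}<81$ exceeds the GRH asymptotic lower bound ($\approx 44.8$) for fields that are not totally real, and part (b) makes no totally-real assumption; one must use the refined bound from the outset, and even then the argument stalls as above.
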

\begin{proof}
  \
  \begin{enumerate}[(a)]
    \item $K$ is a $2$-ramified Galois extension.
      By \cite[Theorem 2.23]{Ha}, either $G$ is a $2$-group (ruled out by Corollary \ref{cor:gen_dihedral}), or $16$ divides $n$.
    \item Here $K$ is a $3$-ramified Galois extension, and the analogue of Harbater's result is \cite[Corollary E]{Ho}.
      Unfortunately, it only guarantees divisibility of $n$ by $9$.
      We give a straightforward modification of Hoelscher's argument that improves the divisibility conclusion, at the cost of assuming GRH.

      If $G$ is solvable, \cite[Corollary D]{Ho} says that $27$ divides $n$, or $G$ is cyclic (ruled out by the irreducibility of $\br$), or $G/p(G)\simeq \Z/2\Z$.
      So in the solvable case, we get that $27$ divides $n$, or $G/p(G)\simeq \Z/2\Z$, in which case $n$ is even, but also divisible by $9$ by the original form of \cite[Corollary E]{Ho}.

      If $G$ is non-solvable, \cite[Theorem 2.10]{Po} forces $n\geq 660$.
      Assuming GRH, Odlyzko's Table 1 implies that $|d_K|^{1/n}\geq 27.328$.
      On the other hand, \cite[Theorem III.2.6]{Ne} gives the upper bound
      \begin{equation*}
        |d_K|^{1/n}< 3^{1+v_3(n)},
      \end{equation*}
      and the combination implies $v_3(n)>2.01$, hence $27\mid n$.\qedhere
  \end{enumerate}
\end{proof}

%
%
%

The case where the Galois group $G$ has $p$-length $0$ (so that $K$ is tamely ramified at $p$) is easily dealt with:
\begin{prop}\label{prop:length0}
  Let $K/\Q$ be a finite Galois extension whose Galois group $G$ has $p$-length $0$.
  Let $d\geq 2$.

  If $p\in\{2,3\}$, there exist no irreducible representations $\br:G_\Q\lra\GL_d(\bF_p)$ with $\Q_{\br}=K$ and $p$-ramified.
\end{prop}
\begin{proof}
  With $N=0$ and $p=2$, we get from Proposition \ref{prop:upper_bound_C} that $|d_K|^{1/n}<2$, and then from Odlyzko's (unconditional) Table 2 that $n<3$, so the Galois group is abelian and hence has no irreducible representations of dimension $>1$.

  With $p=3$ we get $|d_K|^{1/n}<3$, so Odlyzko's Table 2 gives $n<4$, which we rule out in the same manner.
\end{proof}

\begin{prop}\label{prop:length1}
  Let $K/\Q$ be a finite Galois extension whose Galois group $G$ has $p$-length $1$.
  Let $d\geq 2$.

  If $p\in\{2,3\}$, there exist no irreducible representations $\br:G_\Q\lra\GL_d(\bF_p)$ with $\Q_{\br}=K$ and $p$-ramified.
\end{prop}
\begin{proof}
  For $p=2$ and $N=1$ we have from Proposition \ref{prop:upper_bound_C}:
  \begin{equation*}
    C < 3 - \left(\frac{1}{n}+\frac{1}{2^{m_1-1}}\right).
  \end{equation*}
  This gives an initial bound $|d_K|^{1/n}<2^3=8$, for which Odlyzko's Table 2 produces the constraint $n<14$.
  Since $16\mid n$ by Proposition \ref{prop:divisibility}(a), we are done.

  For $p=3$ and $N=1$ we have
  \begin{equation*}
    C < \frac{5}{2} - \left(\frac{1}{n}+\frac{1}{2\cdot 3^{m_1-1}}\right),
  \end{equation*}
  giving $|d_K|^{1/n}<3^{5/2}<15.589$, and from Odlyzko's Table 2 we get $n<80$.

  But $n$ must be a multiple of $18$ or $27$ and its prime-to-$3$ part cannot be in $\{4,5,7\}$ by Corollary \ref{cor:lmfdb}.
  The prime-to-$3$ part can also not be $1$ (by Corollary \ref{cor:gen_dihedral}(a)) or $2$: since the $3$-length is $1$, the $3$-Sylow subgroup of $G$ is elementary abelian, so if the prime-to-$3$ part is $2$ then $G\cong (\Z/3\Z)^m\rtimes(\Z/2\Z)$, therefore irreducible representations of dimension $\geq 2$ are ruled out by Corollary \ref{cor:gen_dihedral}(b).

  This leaves as only possibility $n=72$, which gives us the tighter bound
  \begin{equation*}
    C<\frac{5}{2}-\frac{13}{72}<2.320.
  \end{equation*}
  We deduce that $|d_K|^{1/n}<12.792$ and thus $n<40$, and we are done.
\end{proof}

\begin{prop}\label{prop:length2}
  Let $K/\Q$ be a finite Galois extension whose Galois group $G$ has $p$-length $2$.
  Let $d\geq 2$.

  Assuming GRH:
  \begin{enumerate}[(a)]
    \item if $p=2$, there exist no irreducible representations $\br:G_\Q\lra\GL_d(\bF_2)$ with $\Q_{\br}=K$ and $2$-ramified;
    \item if $p=3$, there exist no irreducible representations $\br:G_\Q\lra\GL_d(\bF_3)$ with $\Q_{\br}=K$ totally real and $3$-ramified.
  \end{enumerate}
\end{prop}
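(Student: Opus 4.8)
The plan is to run the iterative discriminant-bound procedure described at the end of Section~\ref{prelim}, exactly as in the proofs of Propositions~\ref{prop:length0} and~\ref{prop:length1}, but now with $N=2$. Specializing Proposition~\ref{prop:upper_bound_C} to $N=2$ and dropping the (nonnegative) parenthetical term gives the crude bounds
\begin{equation*}
  C < 5 \quad (p=2), \qquad C < 4 \quad (p=3),
\end{equation*}
so that $|d_K|^{1/n} < 2^5 = 32$ when $p=2$ and $|d_K|^{1/n} < 3^4 = 81$ when $p=3$. The first key observation is that $32$ lies below the GRH asymptotic lower bound for root discriminants of arbitrary number fields (about $8\pi e^\gamma \approx 44.76$), so for $p=2$ Odlyzko's Table~1 already produces an absolute bound $n \leq n_{\max}$. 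For $p=3$ the crude bound $81$ exceeds this general asymptotic, which is precisely why part~(b) is restricted to the totally real case: the hypothesis that $\Q_{\br}$ is totally real lets us instead compare against the (much larger) GRH asymptotic for totally real fields (about $215$), and $81$ lies safely below it, again yielding a finite $n_{\max}$ from the totally real column of Table~1.

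Next I would feed in the group-theoretic constraints. Since the $p$-length is exactly $2$, the Frattini filtration of a $p$-Sylow subgroup gives $m_1, m_2 \geq 1$ with $m_1 + m_2 = v_p(n)$; together with Proposition~\ref{prop:divisibility} (namely $16 \mid n$ for $p=2$, and $18 \mid n$ or $27 \mid n$ for $p=3$ under GRH) this forces $v_p(n)$, hence $m_1+m_2$, to be reasonably large. For any fixed candidate $n \leq n_{\max}$ the subtracted quantity $\tfrac1n + \tfrac{1}{p-1}\bigl(p^{1-m_1}+p^{1-m_2}\bigr)$ is then bounded below, its minimum over admissible $(m_1,m_2)$ occurring (by convexity) when the $m_i$ are as balanced as $v_p(n)$ permits. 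Plugging this lower bound back into Proposition~\ref{prop:upper_bound_C} yields a strictly smaller value of $C$, hence a smaller root-discriminant bound, hence a smaller $n_{\max}$ from Table~1. Iterating this minimization drives $n_{\max}$ down.

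Once the degree is confined to a short finite list, I would eliminate the survivors using the arithmetic obstructions already assembled: the prime-to-$p$ part of $n$ must avoid $\{3,5,7\}$ (for $p=2$) or $\{4,5,7\}$ (for $p=3$) by Corollary~\ref{cor:lmfdb}, cannot be $1$ by Corollary~\ref{cor:gen_dihedral}(a), and cannot yield a generalized dihedral group (ruling out prime-to-$p$ part $2$ when $p$ is odd) by Corollary~\ref{cor:gen_dihedral}(b); any remaining $n$ can be attacked with the sharper, $n$-specific form of the bound in Proposition~\ref{prop:upper_bound_C}, exactly as was done for $n=72$ in the length-$1$ case.

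The step I expect to be the main obstacle is the iteration itself. Because the initial root-discriminant bounds ($32$ and $81$) are so much larger than in the length-$0$ and length-$1$ cases, the first $n_{\max}$ coming out of Odlyzko's tables is large, and several passes of the minimize-then-reread-the-table loop---interleaved with the divisibility and Corollary~\ref{cor:lmfdb}/\ref{cor:gen_dihedral} constraints---will be needed before the list of admissible degrees becomes short enough to finish by hand. Care is also required for $p=3$ to confirm that the totally real GRH asymptotic genuinely exceeds $81$ across the whole relevant range of degrees, since that is the only thing keeping $n$ bounded in part~(b).
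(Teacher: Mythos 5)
Your strategy coincides with the paper's: the crude bounds $C<5$ and $C<4$, Odlyzko's GRH Table 1 (and your explanation of why part (b) needs the totally real hypothesis --- $32$ lies below the general GRH asymptotic $8\pi e^{\gamma}\approx 44.76$ while $81$ does not, but does lie below the totally real asymptotic $\approx 215$ --- is exactly the right reading of the paper's use of the tables), then the iterated minimization of $\tfrac{1}{n}+\tfrac{1}{p-1}\bigl(p^{1-m_1}+p^{1-m_2}\bigr)$ over admissible degrees, interleaved with Proposition~\ref{prop:divisibility} and Corollary~\ref{cor:lmfdb}. For $p=2$ this is indeed all that is needed: the iteration gives $n<4800$, then $n<840$, $n<200$, $n<56$, and no multiple of $16$ with prime-to-$2$ part outside $\{1,3,5,7\}$ is below $56$.

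For $p=3$, however, there is a genuine gap in your endgame. The iteration does not empty the candidate list: it stabilizes at $n<21$, leaving the survivor $n=18$ (divisible by $18$, prime-to-$3$ part equal to $2$), and the sharper $n$-specific bound cannot kill it, since for $n=18$ one gets exactly $C<4-\tfrac{19}{18}$, i.e. $|d_K|^{1/n}<25.42$, whence again only $n<21$. None of the tools you list then applies: Corollary~\ref{cor:lmfdb} says nothing about prime-to-$3$ part $2$, and --- this is the actual error --- Corollary~\ref{cor:gen_dihedral}(b) does \emph{not} rule out prime-to-$p$ part $2$ here, because it only concerns $G\cong(\Z/p\Z)^m\rtimes(\Z/2\Z)$, whose $p$-Sylow is elementary abelian, i.e.\ has $p$-length $1$; under the hypothesis of the proposition the $3$-Sylow has $3$-length $2$, so that corollary is vacuous. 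What is needed (and what the paper does) is a case analysis of the three non-abelian groups of order $18$: two of them, $S_3\times(\Z/3\Z)$ and $(\Z/3\Z)^2\rtimes(\Z/2\Z)$, have elementary abelian $3$-Sylow and are excluded by the length hypothesis itself, while the third, the dihedral group $D_9=(\Z/9\Z)\rtimes(\Z/2\Z)$, is excluded by Remark~\ref{rem:dihedral} (irreducible $\bF_p$-representations of $D_n$, with $n$ odd and divisible by $p$, are one-dimensional). That remark is the one ingredient of the length-$2$ proof with no analogue in the length-$0$ and length-$1$ arguments, and it is absent from your proposal.
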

\begin{proof}
  \begin{enumerate}[(a)]
    \item Proposition \ref{prop:upper_bound_C} gives
      \begin{equation*}
        C < 5 - \left(\frac{1}{n}+\frac{1}{2^{m_1-1}}+\frac{1}{2^{m_2-1}}\right),
      \end{equation*}
      hence an initial bound $|d_K|^{1/n}<2^5=32$.
      Using Odlyzko's (GRH-conditional) Table 1 we get $n<4800$.
      But $n$ must be a multiple of $16$, and its prime-to-$2$ part cannot be in $\{1,3,5,7\}$.
      This leaves us with a set of $271$ possibilities
      \begin{equation*}
        n\in\{144,176,208,\dots,4768,4784\}.
      \end{equation*}
      A computer search (see Appendix) finds the minimum $865/4608$ for $n=4608$, improving our bound on $C$ to $C<4.813$, which in turn gives $|d_K|^{1/n}<28.110$ and $n<840$.
      The further iterations are summarized in Table \ref{tab_length2_p2}.

      \begin{table}[h]
        \begin{tabular}{cccccl}
          \toprule
          $n<\dots$ & $\min$     & $C<\dots$ & $|d_K|^{1/n}<\dots$ \\ \midrule
          $\infty$  & ?          & $5$       & $32$                \\
          $4800$    & $865/4608$ & $4.813$   & $28.110$            \\
          $840$     & $417/832$  & $4.499$   & $22.612$            \\
          $200$     & $177/176$  & $3.995$   & $15.945$            \\
          $56$                                                     \\
          \bottomrule
        \end{tabular}
        \caption{Iterative lowering of degree bound, length $2$, $p=2$}
        \label{tab_length2_p2}
      \end{table}

      At this point we have $n<56$ and the set of suitable $n$ is empty.

    \item
      Initially we have
      \begin{equation*}
        C < 4 - \left(\frac{1}{n}+\frac{1}{2\cdot 3^{m_1-1}}+\frac{1}{2\cdot 3^{m_2-1}}\right),
      \end{equation*}
      giving $|d_K|^{1/n}<3^4=81$.
      From Odlyzko's Table 1, we see that (under GRH, and for $K$ totally real) $n<280$.
      But, as in Proposition \ref{prop:length1}, $n$ must be divisible by $18$ or $27$, and its prime-to-$3$ part cannot be in $\{1,4,5,7\}$, so we get the list of possibilities
      \begin{equation*}
        n\in\{18, 54, 72, 90, 126, 144, 162, 180, 198, 216, 234, 252, 270\}.
      \end{equation*}
      A computer search (see Appendix) gives iteratively the results in Table \ref{tab_length2_p3}.

      \begin{table}[h]
        \begin{tabular}{cccccl}
          \toprule
          $n<\dots$ & $\min$   & $C<\dots$ & $|d_K|^{1/n}<\dots$ \\ \midrule
          $\infty$  & ?        & $4$       & $81$                \\
          $280$     & $55/162$ & $3.661$   & $55.814$            \\
          $88$      & $37/54$  & $3.315$   & $38.165$            \\
          $40$      & $19/18$  & $2.945$   & $25.417$            \\
          $21$                                                   \\
          \bottomrule
        \end{tabular}
        \caption{Iterative lowering of degree bound, length $2$, $p=3$}
        \label{tab_length2_p3}
      \end{table}

      At this point the set of candidates for $n$ is not yet empty!
      We are left with the case $n=18$, which we now rule out.
      There are three (isomorphism classes of) non-abelian groups of order $18$:
      \begin{equation*}
        S_3\times (\Z/3\Z),\, (\Z/3\Z)^2\rtimes (\Z/2\Z),\, (\Z/9\Z)\rtimes (\Z/2\Z).
      \end{equation*}
      For the first two, a $3$-Sylow subgroup is isomorphic to $(\Z/3\Z)^2$, hence the $3$-length is $1$, which rules them out given the assumptions.
      The last group is the dihedral group $D_9$, which we can rule out using Remark \ref{rem:dihedral}.
      \qedhere
  \end{enumerate}
\end{proof}

\begin{prop}\label{prop:length3}
  Let $K/\Q$ be a finite Galois extension whose Galois group $G$ has $2$-length $3$.
  Let $d\geq 2$.

  Assuming GRH, there exist no irreducible totally real $2$-ramified representations $\br:G_\Q\lra\GL_d(\bF_2)$ with $\Q_{\br}=K$.
\end{prop}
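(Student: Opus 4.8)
The plan is to run the same iterative argument as in Proposition \ref{prop:length2}(a), now with $N=3$ in place of $N=2$, using total reality to keep the Odlyzko input finite. Specializing Proposition \ref{prop:upper_bound_C} to $N=3$, $p=2$ gives
\begin{equation*}
  C < 7 - \left(\frac{1}{n} + \frac{1}{2^{m_1-1}} + \frac{1}{2^{m_2-1}} + \frac{1}{2^{m_3-1}}\right),
\end{equation*}
so the crude bound is $C<7$ and $|d_K|^{1/n} < 2^7 = 128$. Since $\br$ is totally real, $K=\Q_{\br}$ is a totally real field, and the GRH-conditional Odlyzko bound for totally real fields (whose asymptotic root-discriminant constant, roughly $215$, comfortably exceeds $128$) returns a finite initial degree bound $n < n_{\max}$. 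This is exactly the point where both GRH and total reality are indispensable: without total reality the value $128$ would sit above the totally complex GRH asymptotic bound (around $45$), and Odlyzko's table would impose no constraint on $n$ at all.

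Next I would trim the candidate degrees using the arithmetic constraints already in place: $16\mid n$ by Proposition \ref{prop:divisibility}(a), while the prime-to-$2$ part of $n$ can be neither $1$ (else $G$ is a $2$-group, excluded by Corollary \ref{cor:gen_dihedral}(a)) nor any element of $\{3,5,7\}$ (Corollary \ref{cor:lmfdb}). On the resulting finite list I would minimize $g(n)=\frac{1}{n}+\sum_{i=1}^{3}2^{1-m_i}$: for fixed $n$ the sum $\sum_{i=1}^{3}2^{1-m_i}$ subject to $m_1+m_2+m_3=v_2(n)$ and $m_i\geq 1$ is smallest for the most balanced partition (by convexity of $m\mapsto 2^{1-m}$), and the global minimizer is governed mainly by maximizing $v_2(n)$, the term $1/n$ being a minor correction. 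Feeding this minimum back into Proposition \ref{prop:upper_bound_C} sharpens $C$, hence $|d_K|^{1/n}$, hence $n_{\max}$; iterating and tabulating the successive bounds in a table analogous to Table \ref{tab_length2_p2} should drive the degree bound down rapidly.

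The hard part will be closing the iteration. Unlike the totally complex $p=2$ situation, the surviving small degrees include those whose prime-to-$2$ part is $\geq 9$ (for instance $n=16\cdot 9=144$, with $2$-Sylow of order $16$ and balanced exponents $(m_1,m_2,m_3)=(2,1,1)$), and these are \emph{not} directly killed by Corollary \ref{cor:lmfdb}, since the Jones--Roberts tables only reach degree $7$. I therefore expect the process to terminate with a short residual list rather than the empty set, and the final step will be to dispose of each remaining $n$ by hand: enumerating the finite groups $G$ of that order having $2$-length exactly $3$ and an irreducible $\bF_2$-representation of dimension $\geq 2$, then excluding them via their $2$-Sylow structure together with the fixed-field/divisibility reasoning of Corollary \ref{cor:lmfdb} and the dihedral-type count of Lemma \ref{lem:gen_dihedral}, exactly as the case $n=18$ was treated in Proposition \ref{prop:length2}(b).
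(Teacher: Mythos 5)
Your setup coincides exactly with the paper's proof: the specialization of Proposition \ref{prop:upper_bound_C} to $N=3$, $p=2$, the crude bound $|d_K|^{1/n}<128$, the observation that only the totally real GRH Odlyzko bound (asymptotic constant $\approx 215$) lies above $128$ while the general one ($\approx 45$) does not, the constraints $16\mid n$ (Proposition \ref{prop:divisibility}(a)) and odd part of $n$ not in $\{1,3,5,7\}$ (Corollaries \ref{cor:lmfdb} and \ref{cor:gen_dihedral}), and the iterative minimization of $\frac{1}{n}+\sum_{i=1}^3 2^{1-m_i}$ (your convexity remark about balanced partitions correctly describes what the Appendix's computer search finds).

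The genuine discrepancy is in your endgame, and it is a substantive one: you predict that the iteration stalls on a residual list containing degrees such as $n=144$, to be handled by ad hoc group theory. In fact the iteration closes completely. Every admissible degree satisfies $n\geq 16\cdot 9=144$, since $16\mid n$ and the odd part of $n$ is at least $9$; and the paper's Table \ref{tab_length3} drives the bound $4800\to 220\to 18$. At the step $n<220$ the only candidates are $\{144,176,208\}$, whose minimum bracket value $521/208$ gives $C<4.496$, hence $|d_K|^{1/n}<22.565$ and (Odlyzko, GRH, totally real) $n<18<144$, so the candidate set is empty — the same mechanism that closed the length-$2$, $p=2$ case in Table \ref{tab_length2_p2}, where the final bound $56$ also fell below $144$. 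This matters beyond bookkeeping: the fallback you sketch for a surviving $n=144$ is not something the paper's tools can deliver. Corollary \ref{cor:lmfdb} says nothing about odd part $9$, Lemma \ref{lem:gen_dihedral} only treats $(\Z/p\Z)^m\rtimes(\Z/2\Z)$ for \emph{odd} $p$, and there are many groups of order $144$ whose $2$-Sylow (e.g.\ $\Z/8\Z\times\Z/2\Z$, or the dihedral, semidihedral, or quaternion groups of order $16$) has $2$-length $3$; ruling out each such group as a $2$-ramified Galois group with an irreducible representation would be a substantial new project. So your plan succeeds only because the "hard part" you anticipate never materializes; the proof is completed by actually running the numerical iteration, not by the deferred case analysis.
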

\begin{proof}
  We start with
  \begin{equation*}
    C < 7 - \left(\frac{1}{n}+\frac{1}{2^{m_1-1}}+\frac{1}{2^{m_2-1}}+\frac{1}{2^{m_3-1}}\right),
  \end{equation*}
  giving $|d_K|^{1/n}<2^7=128$.
  From Odlyzko's Table 1, assuming GRH and that $K$ is totally real, we get that $n<4800$.
  We also know that $n$ must be divisible by $16$ and have prime-to-$2$ part not in $\{1,3,5,7\}$.

  A computer search (see Appendix) gives the results in Table \ref{tab_length3}.

  \begin{table}[h]
    \begin{tabular}{cccccl}
      \toprule
      $n<\dots$ & $\min$      & $C<\dots$ & $|d_K|^{1/n}<\dots$ \\ \midrule
      $\infty$  & ?           & $7$       & $128$               \\
      $4800$    & $3457/4608$ & $6.250$   & $76.110$            \\
      $220$     & $521/208$   & $4.496$   & $22.565$            \\
      $18$                                                      \\
      \bottomrule
    \end{tabular}
    \caption{Iterative lowering of degree bound, length $3$, $p=2$}
    \label{tab_length3}
  \end{table}

  At this point we have exhausted the possibilities for $n$.
\end{proof}

\begin{theoremA}
  Assume GRH.
  There exist no irreducible
  \begin{enumerate}[(a)]
    \item  $2$-ramified representations $\br:G_\Q\lra\GL_d(\bF_2)$ with $d\in\{2,3,4\}$;

    \item totally real $3$-ramified representations $\br:G_\Q\lra\GL_d(\bF_3)$ with $d\in\{2,3,4\}$;

    \item totally real $2$-ramified representations $\br:G_\Q\lra\GL_d(\bF_2)$ with $d\in\{5,6,7,8\}$.
  \end{enumerate}
\end{theoremA}
\begin{proof}
  Follows from the previous Propositions, since the $p$-length of any finite subgroup of $\GL_d(\bF_p)$ is $\leq\lceil \log_2(d)\rceil$, as shown at the beginning of \cite[Section 3]{Mo1}.
\end{proof}

\section{The self-dual five-dimensional case (Cor B)}\label{gl5}

In this section we prove
\begin{corB}
  Assume GRH.
  There exist no self-dual $2$-ramified representations $\br:G_\Q\lra \GL_5(\bF_2)$ of length $2$.
\end{corB}

Assume $\br$ takes the values in $F=\F_{2^r}$ for an integer $r\ge 1$.
For a non-degenerate
symmetric $F$-bilinear map $B:F^{\oplus 2n+1}\times F^{\oplus 2n+1}\lra F$, we define
\begin{equation*}
  \OO(B)(F):=\{g\in \GL_{2n+1}(F)\ |\ B(gx,gy)=B(x,y),\ \forall x,y\in F^{\oplus 2n+1}\}.
\end{equation*}
By \cite[p.23, Theorem 1.5.41]{BHRD}, $\OO(B)(F)\simeq \OO(J)(F)$ where
$J$ is the anti-diagonal matrix whose anti-diagonal entries are all 1.
By direct computation, it is easy to see that each element of $\OO(J)(F)$ can be written as
$\left(\begin{array}{ccc}
      A & 0 & B \\
      0 & 1 & 0 \\
      C & 0 & D
    \end{array}
  \right)$
with
$\left(\begin{array}{cc}
      A & B \\
      C & D
    \end{array}
  \right)\in \Sp_{2n}(F)$. This is a special feature in the case of even characteristic.
Thus, $\OO(B)(F)\simeq \OO(J)(F)\simeq \Sp_{2n}(F)$.

We are now ready to prove Corollary B.
\begin{proof}
  Since $\br$ is $2$-ramified, the determinant representation
  $\det(\br)$ factors though pro-$2$-group $\Gal(\Q(\zeta_{2^\infty})/\Q)$.
  Therefore, $\det(\br)$ has to be trivial since its image inside $\bF^\times_2$ has odd cardinality.
  Thus, by assumption, $\br^\vee\simeq \br$.
  It yields
  \begin{multline*}
    F\subset \End_{F[G_\Q]}(\br)\simeq \Bil_{F[G_\Q]}(\br\times \br^\vee,F)
    \simeq
    \Bil_{F[G_\Q]}(\br\times \br,F)= \\
    \Sym_{F[G_\Q]}(\br\times \br,F)\oplus
    \Alt_{F[G_\Q]}(\br\times \br,F),
  \end{multline*}
  where $\Bil$, $\Sym$, resp.\ $\Alt$ denote the spaces of all $F[G_\Q]$-bilinear, symmetric bilinear, resp.\ alternating bilinear forms on the corresponding $F[G_\Q]$-modules.
  Since $\br$ has odd dimension, $\Alt_{F[G_\Q]}(\br\times \br,F)=0$.
  Thus, $\Sym_{F[G_\Q]}(\br\times \br,F)$ contains an element
  which comes from $F^\times \subset (\End_{F[G_\Q]}(\br))^\times$
  and
  it means that there exists a non-degenerate symmetric form $B$ such that
  \begin{equation*}
    \im(\br)\subset \OO(B)(F)\simeq \OO(J)(F)\subset \GL_5(F).
  \end{equation*}
  By using the description of $\OO(J)(F)$, there exists
  a mod 2 Galois representation $\br_1:G_\Q\lra \Sp_4(F)\subset \GL_4(F)$ such that
  $\br\simeq \br_1\oplus \mathbf{1}$.
  Since $\br$ is of length 2, $\br_1$ is irreducible over $F$. Thus, one can apply Theorem A to deduce the claim.
\end{proof}

\section{Non-existence of \texorpdfstring{$\GSp_4(\F_2)$}{GSp4F2}-valued representations (Thm C)}\label{GSp4tri}
In this section, we show the non-existence of irreducible $2$-ramified representations $\br:G_\Q\lra \GSp_4(\F_2)$.
The method may be useful for $G_K$ for some other number fields $K$ and has potential
applications to representations arising from the study of $2$-torsion points on abelian surfaces over $K$.

For each integer $n\ge 1$, $S_n$ stands for the $n$-th symmetric group.
The following result is due to Miho Aoki with a minor modification:
\begin{lemma}\label{poly}Let $K/\Q$ be a Galois extension with Galois group $G$
  and an injective group homomorphism $\iota:G\hookrightarrow S_n$.
  There exists a separable polynomial $f(x)\in\Q[x]$ of degree at most $n$
  such that $K=\Q_f$, the splitting field of $f$. Further, if $G$ acts transitively on $\{1,2,\ldots,n\}$, then $f(x)$ is irreducible over $\Q$ of degree $n$.
\end{lemma}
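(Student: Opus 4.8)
The plan is to exploit the faithful action of $G$ on $\{1,\dots,n\}$ furnished by $\iota$ and to build $f$ orbit by orbit via the Galois correspondence. I would write $\{1,\dots,n\}=O_1\sqcup\dots\sqcup O_k$ for the decomposition into $G$-orbits, and for each $j$ pick a point of $O_j$ with stabilizer $H_j\leq G$, so that $|O_j|=[G:H_j]$ and $\sum_j[G:H_j]=n$. Because $\iota$ is injective, the kernel of the permutation action, namely $\bigcap_{j}\mathrm{Core}_G(H_j)$ where $\mathrm{Core}_G(H)=\bigcap_{g\in G}gHg^{-1}$, is trivial; this triviality is the engine of the whole argument.

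For each $j$, let $L_j=K^{H_j}$ be the fixed field; by the Galois correspondence $[L_j:\Q]=[G:H_j]=|O_j|$, and since we are in characteristic $0$ the primitive element theorem gives $L_j=\Q(\theta_j)$ for some $\theta_j$. Let $g_j\in\Q[x]$ be the minimal polynomial of $\theta_j$, irreducible of degree $|O_j|$. The Galois conjugates of $\theta_j$ are exactly the roots of $g_j$, and $G$ permutes them as it permutes the cosets in $G/H_j$, i.e. as it acts on $O_j$; consequently the splitting field of $g_j$ is the Galois closure of $L_j/\Q$, which is $K^{\mathrm{Core}_G(H_j)}$. To secure separability I would then let $f$ be the product of the \emph{distinct} polynomials among $g_1,\dots,g_k$: a product of pairwise distinct irreducibles in characteristic $0$ is squarefree, hence separable, and $\deg f\leq\sum_j|O_j|=n$.

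It then remains to identify the splitting field of $f$. Since the splitting field of a product is the compositum of the splitting fields of the factors, $\Q_f$ is the compositum of the fields $K^{\mathrm{Core}_G(H_j)}$, which by the Galois correspondence equals $K^{\bigcap_j\mathrm{Core}_G(H_j)}=K^{\{1\}}=K$, using the faithfulness recorded above; discarding repeated factors is harmless, since equal polynomials have equal splitting fields and hence equal cores. Finally, in the transitive case there is a single orbit $O_1=\{1,\dots,n\}$, the stabilizer $H_1$ has index $n$ and trivial core, so $f=g_1$ is already irreducible of degree exactly $n$ with splitting field $K$.

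I expect the only genuinely delicate points to be the two I have flagged. The first is arranging separability: the ``minor modification'' of Aoki's statement appears to be precisely the passage from the naive product $\prod_j g_j$ to the product of its distinct factors, which is needed exactly when two orbits yield the same minimal polynomial. The second is verifying that the splitting field is \emph{all} of $K$ and not a proper subfield, and this is where the hypothesis that $\iota$ is injective does the essential work, through the triviality of the intersection of the normal cores.
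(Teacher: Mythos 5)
Your proposal is correct and follows essentially the same route as the paper's proof: orbit decomposition, fixed fields $K^{H_j}$ of point stabilizers with primitive elements and their minimal polynomials, identification of each factor's splitting field as $K^{\mathrm{Core}_G(H_j)}$, the compositum argument via triviality of $\bigcap_j \mathrm{Core}_G(H_j)$ (the paper's $\bigcap_k N_{i_k}=\{1\}$), and separability secured by discarding repeated irreducible factors. Even the two ``delicate points'' you flag are exactly the ones the paper handles, and in the same way.
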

\begin{proof}Put $H:=\iota(G)\subset S_n$.
  For each $i$ with $1\le i\le n$, let $H_i$ be the stabilizer of $i$ and put
  $G_i=\iota^{-1}(H_i)\subset G$. We also define
  \begin{equation*}
    N_i:=\bigcap_{\sigma\in G}\sigma G_i \sigma^{-1}.
  \end{equation*}
  Let us consider the orbit decomposition
  \begin{equation*}
    \{1,2,\ldots,n\}=\coprod_{k=1}^r O_H(i_k).
  \end{equation*}
  For each $i_k$,
  put $K_{i_k}:=K^{G_{i_k}}$ and choose $\alpha_{i_k}\in K$ such that
  $K_{i_k}=K^{G_{i_k}}=\Q(\alpha_{i_k})$. For each $\sigma \in G$, $\sigma(K_{i_k})=
    K^{\sigma G_{i_k}\sigma^{-1}}$, so the composite of all conjugates of $K_{i_k}$
  is $K^{N_{i_k}}$.
  Let $f_{i_k}(x)$ be the minimal polynomial of $\alpha_{i_k}$ and define
  \begin{equation}\label{g(x)}
    \widetilde{f}(x):=\prod_{k=1}^r f_{i_k}(x).
  \end{equation}
  It is easy to see that
  \begin{equation*}
    \deg(\widetilde{f}(x))=\sum_{k=1}^r[K_{i_k}:\Q]=\sum_{k=1}^r[G:G_{i_k}]=\sum_{k=1}^r[H:H_{i_k}]=
    \sum_{k=1}^r|O_H(i_k)|=n.
  \end{equation*}
  We show that $\Q_{\widetilde{f}}=K$: since $\iota$ is injective, we have
  \begin{equation*}
    \bigcap_{k=1}^r N_{i_k}\subset \bigcap_{j=1}^n G_{j}=\iota^{-1}(\bigcap_{j=1}^n H_j)
    =\iota^{-1}(\bigcap_{k=1}^r H_{i_k})=\iota^{-1}(\{1\})=\{1\},
  \end{equation*}
  therefore
  \begin{equation*}
    \Q_{\widetilde{f}}=\prod_{k=1}^r K^{N_{i_k}}=K^{\bigcap_{k=1}^r N_{i_k}}=K.
  \end{equation*}
  It remains to deal with the (possible) non-separability of $\widetilde{f}$.
  If there exist $k,k'\in\{1,\dots,r\}$ such that $\alpha_{i_k}$ is conjugate under the Galois action of $G$ to $\alpha_{i_{k'}}$, then $f_{i_k}(x)=f_{i_{k'}}(x)$, so we can keep $f_{i_k}(x)$ and discard $f_{i_{k'}(x)}$.
  We obtain a subset $S$ of $\{1,\ldots, r\}$ such that
  \begin{equation*}
    f(x):=\prod_{k\in S}f_{i_k}(x)
  \end{equation*}
  is separable and $\Q_f=\Q_{\widetilde{f}}=K$. Clearly
  $\deg(f(x))\le \deg(\widetilde{f}(x))=n$.

  The latter claim follows easily from the construction.
\end{proof}

Fix the isomorphism from \cite[Section 2]{GY}:
\begin{equation}\label{iden}
  S_6\simeq \GSp_4(\F_2).
\end{equation}
\begin{prop}\label{abs-case}
  Given an absolutely irreducible representation
  \begin{equation*}
    \br:G_\Q\lra \GSp_4(\F_2),
  \end{equation*}
  there exists an irreducible polynomial $f(x)$ over $\Q$ of degree 5 or 6 such that $\Q_{\br}=\Q_f$
  and $\br\simeq \tau_f$ as a representation over $\F_2$ of $G_\Q$ where $\tau_f$ is given by permuting the roots of $f$ and using (\ref{iden}):
  \begin{equation*}
    \tau_f:\Gal(\Q_{f}/\Q)\hookrightarrow S_6\simeq \GSp_4(\F_2).
  \end{equation*}
\end{prop}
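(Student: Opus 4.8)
The plan is to feed the image of $\br$ into Lemma~\ref{poly} and then use irreducibility to control the degree of the polynomial produced. Write $G:={\rm Im}(\br)$, so ${\rm Gal}(\Q_{\br}/\Q)\cong G$, and transport $G$ through the isomorphism (\ref{iden}) to get an injection $\iota\colon {\rm Gal}(\Q_{\br}/\Q)\hookrightarrow S_6$, that is, an action of $G$ on $\Omega=\{1,\dots,6\}$. Since $\F_2^\times=\{1\}$ we have $\GSp_4(\F_2)=\Sp_4(\F_2)\cong S_6$, and under (\ref{iden}) the natural $4$-dimensional symplectic module is the standard model $W=V_0/\langle w\rangle$, where $V=\F_2^{\Omega}$ is the permutation module, $w$ is the all-ones vector, and $V_0=\{v:\sum_i v_i=0\}$ (note $w\in V_0$ because $6$ is even). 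Thus $\br$ is isomorphic as an $\F_2[G]$-module to $W$.

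First I would apply Lemma~\ref{poly} with $n=6$, obtaining a separable $f\in\Q[x]$ with $\deg f\le 6$, $\Q_f=\Q_{\br}$, and---crucially for the final step---with the Galois action on the roots of $f$ realizing exactly $\iota$. It then remains to determine the orbit structure of $G$ on $\Omega$. Let $r$ be the number of orbits, of sizes $n_1,\dots,n_r$. The fixed space $V^G$ is spanned by the $r$ orbit-sum vectors, and a short check gives $\dim V_0^G=r$ if all $n_j$ are even and $\dim V_0^G=r-1$ otherwise; since $w\in V_0^G$ is nonzero, the image of $V_0^G$ in $W$ has dimension $\dim V_0^G-1$. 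As $W$ is irreducible of dimension $4>1$ we must have $W^G=0$, forcing $\dim V_0^G=1$. This leaves only the orbit types $6$ (transitive), $5+1$, and $3+3$.

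Next I would eliminate $3+3$, the one non-transitive type surviving the fixed-point count. Here $V=V^{(1)}\oplus V^{(2)}$ splits into the two blocks; each $(V^{(j)})_0$ is a $2$-dimensional $G$-submodule, the block all-ones vector lies outside it (as $3$ is odd), and one verifies $V_0=(V^{(1)})_0\oplus(V^{(2)})_0\oplus\langle w\rangle$ as $G$-modules, whence $W\cong(V^{(1)})_0\oplus(V^{(2)})_0$ is reducible, a contradiction. So $G$ is either transitive on $\Omega$ or fixes one point and is transitive on the other five. In the transitive case Lemma~\ref{poly} already gives $f$ irreducible of degree $6$. In the $5+1$ case the polynomial of Lemma~\ref{poly} factors as an irreducible quintic (from the $5$-orbit) times a linear factor (the fixed point, whose root is rational); discarding the linear factor does not change the splitting field and leaves an irreducible $f$ of degree $5$ with $\Q_f=\Q_{\br}$. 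Finally, because the root-permutation action of ${\rm Gal}(\Q_f/\Q)$ is $\iota$ by construction, $\tau_f$ (the composite of $\iota$ with (\ref{iden})) agrees with $\br$ up to conjugacy in $\GSp_4(\F_2)$, i.e. $\br\simeq\tau_f$.

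I expect the orbit analysis to be the main obstacle, and within it the $3+3$ case: it is the only reducible configuration that passes the $W^G=0$ test, so excluding it genuinely requires exhibiting the splitting $W\cong(V^{(1)})_0\oplus(V^{(2)})_0$ rather than a bare dimension count. A secondary point demanding care is the compatibility of (\ref{iden}) with the permutation picture---namely that $\br$ really is the module $V_0/\langle w\rangle$---which I would take from the standard construction of the isomorphism $S_6\cong\Sp_4(\F_2)$ in \cite{GY}.
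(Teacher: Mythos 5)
Your proposal is correct, but it reaches the conclusion by a genuinely different route than the paper. The paper's proof also starts from Lemma~\ref{poly}, but then determines the possible images by an exhaustive Magma search (detailed in the appendix) over all subgroups of $S_6$ admitting an absolutely irreducible $4$-dimensional module over an extension of $\F_2$; this yields the list of orders $\{20,36,60,72,120,360,720\}$, and the case analysis proceeds group by group: when $5$ divides the order, $H$ is one of $F_{20}, A_5, S_5, A_6, S_6$ and either fixes a point (degree $5$) or is transitive (degree $6$); for orders $36$ and $72$, the group is either transitive or conjugate to ${\rm Aut}(\{1,2,3\})\times{\rm Aut}(\{4,5,6\})$, in which case reducibility is quoted from \cite{TY} to discard it. You replace the computer search entirely by a fixed-point computation in the explicit model $W=V_0/\langle w\rangle$: the condition $W^G=0$ forced by irreducibility cuts the orbit types down to $6$, $5+1$, $3+3$, and your direct splitting $V_0=(V^{(1)})_0\oplus(V^{(2)})_0\oplus\langle w\rangle$ handles the $3+3$ case that the paper disposes of via \cite{TY} (indeed, the $S_3\times S_3$ image in the paper is exactly your $3+3$ configuration). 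Each approach has its advantages: the paper's search produces the explicit list of possible image groups, which is information of independent interest, while your argument is computer-free, needs only irreducibility over $\F_2$ rather than absolute irreducibility, and would adapt to base fields other than $\Q$. The one point you correctly flag as requiring care --- that under (\ref{iden}) the standard symplectic module is $V_0/\langle w\rangle$ (and not, say, its twist by the outer automorphism of $S_6$) --- is indeed supplied by the construction of the isomorphism in \cite{GY}, and the paper's own proof relies on the same compatibility when it asserts $\br=\tau_f$ ``by construction.''
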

\begin{proof}
  By Lemma \ref{poly} there exists a separable polynomial $f(x)$ over $\Q$ such that $\Q_{\br}=\Q_f$.
  By construction and (\ref{iden}), we see easily that $\br=\tau_f$.

  Consider $H:=\im(\br)$ as a subgroup of $S_6$ by the identification (\ref{iden}).
  Using Magma (see Appendix \ref{app-calc}) we find that the absolute irreducibility assumption restricts the order of $H$ as follows:
  \begin{equation*}
    |H|\in\{20, 36, 60, 72, 120, 360, 720\}.
  \end{equation*}
  We start by considering the cases where $|H|$ is divisible by $5$.
  Then
  \begin{equation*}
    H\cong F_{20}, A_5, S_5, A_6, \text{ or }S_6,
  \end{equation*}
  where $F_{20}=C_5\rtimes C_4$ is the Frobenius group of order $5(5-1)$.
  Also, either $H$ stabilizes one element of $\{1,2,3,4,5,6\}$, or it acts transitively on $\{1,2,3,4,5,6\}$.
  In the former case, $f(x)$ is irreducible of degree $5$; in the latter, it is irreducible of degree $6$.

  It remains to consider the cases where $|H|$ is not divisible by $5$: $|H|=36, 72$.
  Up to conjugacy, there are three groups with
  $|H|=36$ and a single group with $|H|=72$.
  There are two possibilities:
  \begin{itemize}
    \item $H$ acts transitively on $\{1,2,3,4,5,6\}$; as above, this means that $f(x)$ is irreducible of degree $6$.
      (This is clearly the case for $|H|=72$, and Magma indicates that it also occurs for the second and third conjugacy classes with $|H|=36$.)
    \item $H$ is conjugate to $\Aut(\{1,2,3\})\times \Aut(\{4,5,6\})$ in $S_6$; then $\br$ is reducible as observed in \cite[Section 3.1]{TY}, so we may discard this case (which does occur for the first conjugacy class with $|H|=36$).\qedhere
  \end{itemize}
\end{proof}

\begin{prop}\label{irred_to_abs}
  Any irreducible $2$-ramified representation $\br\colon G_\Q\lra\GSp_4(\F_2)$ is absolutely irreducible.
\end{prop}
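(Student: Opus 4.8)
The plan is to argue by contradiction using the structure of the commutant of an irreducible module over a finite field. Assume $\br$ is irreducible over $\F_2$ but \emph{not} absolutely irreducible, write $G={\rm Im}(\br)$ and let $V=\F_2^4$ be the underlying module (note that over $\F_2$ the similitude character is trivial, so $\GSp_4(\F_2)=\Sp_4(\F_2)$ and no determinant twist intervenes). By Schur's lemma the commutant $D:={\rm End}_{\F_2[G]}(V)$ is a finite division ring, hence a finite field $\F_{2^r}$ by Wedderburn's little theorem, and absolute irreducibility is equivalent to $r=1$; so under our assumption $r>1$. Since $V$ is a $D$-vector space, $r\mid\dim_{\F_2}V=4$, whence $r\in\{2,4\}$. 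Extending scalars to $\bF_2$ then decomposes $V\otimes_{\F_2}\bF_2$ as the direct sum of the $r$ pairwise non-isomorphic $\mathrm{Gal}(\F_{2^r}/\F_2)$-conjugates of a single absolutely irreducible representation $W\colon G_\Q\lra\GL_{4/r}(\bF_2)$. As $W$ is a constituent of $\br$ after restriction of scalars, it is again unramified outside $2$.

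First I would dispose of $r=4$. Here $W$ is one-dimensional, that is, a $2$-ramified character $\chi\colon G_\Q\lra\bF_2^\times$. By Kronecker--Weber any abelian $2$-ramified extension of $\Q$ lies in $\Q(\zeta_{2^\infty})$, so $\chi$ factors through the pro-$2$ group $\mathrm{Gal}(\Q(\zeta_{2^\infty})/\Q)$. But the image of $\chi$ is a finite subgroup of $\bF_2^\times$, hence cyclic of odd order, and the only homomorphism from a pro-$2$ group to a group of odd order is trivial. Thus $\chi$, and therefore $\br$, is trivial, contradicting irreducibility.

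It remains to treat $r=2$, which I expect to be the crux of the argument. In this case $W\colon G_\Q\lra\GL_2(\bF_2)$ is a $2$-ramified, absolutely irreducible two-dimensional mod $2$ representation. The non-existence of such a representation is exactly Tate's theorem \cite{Ta} (the case $p=2$, $d=2$ recalled in the introduction); crucially this is an \emph{unconditional} result, as is required since the present proposition feeds into the unconditional Theorem C. This contradiction rules out $r=2$ and completes the proof.

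The main points to handle with care are all contained in the reduction step: one must check that $V\otimes_{\F_2}\bF_2$ is semisimple with pairwise non-isomorphic conjugate constituents, so that $W$ is genuinely absolutely irreducible of dimension $4/r$ and inherits the unramified-outside-$2$ property; and one must invoke the form of Tate's result valid over $\bF_2$ rather than merely over $\F_2$. Once these are in place, both values $r\in\{2,4\}$ lead to a contradiction and the proposition follows.
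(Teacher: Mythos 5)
Your proof is correct, and its skeleton is the same as the paper's: the constituents of $\br\otimes_{\F_2}\bF_2$ have dimension $1$ or $2$; the one-dimensional ones are trivial because a $2$-ramified character $G_\Q\to\bF_2^\times$ factors through the pro-$2$ group ${\rm Gal}(\Q(\zeta_{2^\infty})/\Q)$ yet has image of odd order; and the two-dimensional ones are ruled out by Tate's unconditional theorem \cite{Ta}, which you correctly invoke in its form over $\bF_2$. The one place you genuinely diverge is the reduction step. The paper obtains semisimplicity of $\br\otimes_{\F_2}\bF_2$ from \cite[Theorem 5.17]{La} and the bound $\dim\leq 2$ on its constituents from \cite[Proposition 3.2]{TY}; you instead deduce both from the structure theory over finite fields: the commutant ${\rm End}_{\F_2[G_\Q]}(V)$ is a finite field $\F_{2^r}$ (Schur plus Wedderburn), $r>1$ divides $4$, and $V\otimes_{\F_2}\bF_2$ is the multiplicity-free direct sum of the $r$ Galois conjugates of a single absolutely irreducible representation of dimension $4/r$. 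This buys you a self-contained argument and slightly more precision (all constituents are conjugate, hence of the same dimension $4/r$, so the cases $r=2$ and $r=4$ cannot mix), at the cost of relying on the Galois-conjugate decomposition for irreducible modules over finite fields --- which is indeed standard: finite division rings are fields, and the conjugates are pairwise non-isomorphic because the commutant $\F_{2^r}\otimes_{\F_2}\bF_2\cong\bF_2^{\,r}$ is commutative, hence the decomposition is multiplicity-free. Either route is complete, and both feed the same two non-existence inputs into the final contradiction.
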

\begin{proof}
  First note that $\br$ is semi-simple over $\bF_2$: by \cite[Theorem 5.17]{La} we have
  \begin{equation*}
    \rad(\bF_2[\im(\br)])=\rad(\F_2[\im(\br)])\otimes_{\F_2}\bF_2,
  \end{equation*}
  and $\rad(\F_2[\im(\br)])=0$ since $\br$ is irreducible over $\F_2$.

  If $\br$ is not absolutely irreducible, by \cite[Proposition 3.2]{TY},
  its irreducible components (over $\bF_2$) are of dimension less than or equal to $2$.

  Let $\br_1:G_\Q\lra \bF^\times_2$ be a one-dimensional component.
  Since it is unramified outside $2$, it factors through
  the profinite $2$-group $\Gal(\Q(\zeta_{2^\infty})/\Q)$.
  On the other hand, its image lies in $\F^\times_{2^n}$ for some $n\in\N$, so has odd order.
  Therefore $\br_1$ must be trivial.

  Insofar as two-dimensional irreducible components (over $\bF_2$) of $\br$ are concerned, \cite[Theorem]{Ta} shows that they (that is, irreducible $2$-ramified two-dimensional mod $2$ Galois representations) do not exist.

  Thus $\br$ is trivial over $\bF_2$ and hence also over $\F_2$.
  This contradicts the irreducibility over $\F_2$.
\end{proof}

\begin{theoremC}
  There exist no representations $\br:G_\Q\lra \GSp_4(\F_2)$ that are irreducible and $2$-ramified.
\end{theoremC}
\begin{proof}
  By Proposition \ref{irred_to_abs}, we may assume that $\br$ is absolutely irreducible.
  We can therefore apply Proposition \ref{abs-case} and get an irreducible polynomial $f(x)$ over $\Q$ of degree $5$ or $6$ such that $\tau_f\simeq\br$.
  Then $\Q_f$ is a quintic or sextic $2$-ramified number field, which is ruled out by Proposition \ref{prop:lmfdb}.
\end{proof}

\section{Non-existence of \texorpdfstring{$\GSp_4(\bF_2)$}{GSp4F2bar}-valued representations (Thm D, Cor E)}\label{GSp4nontri}

We now tackle the more general case of representations $\br:G_\Q\lra \GSp_4(\bF_2)$.

Our first result shows that the image of such a representation must be large:
\begin{theoremD}
  Let $\br:G_\Q\lra \GSp_4(\bF_2)$ be an irreducible $2$-ramified representation.
  Then $\im(\br)$ is either isomorphic to the Suzuki group $\Sz(\F_{2^r})$ for some odd $r>1$, or is equal to $\Sp_4(\F_{2^s})$ for some $s>1$.

  Hence there exist no irreducible $2$-ramified representations $\br:G_\Q\lra \GSp_4(\bF_2)$ such that $|\im(\br)|<29120$.
\end{theoremD}
We include the following well-known result for completeness:
\begin{lemma}\label{tri-char}Let $L/\Q$ be a quadratic extension whose class number $|\Cl(L)|$ is a power of $2$. If $2$ is not inert in $O_L$, then any finite abelian extension of $L$
  unramified outside $2$ has degree $2^n$ for some integer $n\ge 0$.
  In particular, there exist no non-trivial $2$-ramified characters  $\br:G_L\lra \bF^\times_2$.
\end{lemma}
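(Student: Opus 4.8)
The plan is to invoke global class field theory. Writing $M$ for the maximal abelian extension of $L$ unramified outside the primes above $2$ (allowing ramification at the archimedean places), I would identify ${\rm Gal}(M/L)$ as a profinite group assembled from $\Cl(L)$ and the local units at $2$, and then show that this group is pro-$2$. Since every finite abelian extension of $L$ unramified outside $2$ is cut out by a finite quotient of ${\rm Gal}(M/L)$, its degree is then forced to be a power of $2$, which is the first assertion.

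Concretely, taking the inverse limit over moduli supported on the set $S$ consisting of the primes $\mathfrak{p}\mid 2$ and the archimedean places, the standard ray class group exact sequences assemble into
$$1 \to V / \overline{\iota(O_L^\times)} \to {\rm Gal}(M/L) \to \Cl(L) \to 1,$$
where $V = \prod_{\mathfrak{p}\mid 2} O_\mathfrak{p}^\times \times \{\pm 1\}^{r_1}$ ($r_1$ the number of real places of $L$), $\iota$ is the diagonal map from the global units $O_L^\times$, and the bar denotes closure. It then suffices to check that both $\Cl(L)$ and $V$ are pro-$2$: an extension of a pro-$2$ group by a pro-$2$ group is pro-$2$, and a quotient of a pro-$2$ group by a closed subgroup (here the abelian group $V$ modulo $\overline{\iota(O_L^\times)}$) is again pro-$2$.

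That $\Cl(L)$ is a $2$-group is exactly the hypothesis on the class number. For $V$, the decisive point is that $2$ is not inert: then each prime $\mathfrak{p}\mid 2$ of $L$ has residue field $\F_2$, so the tame part $\F_q^\times=\mu_{q-1}$ of $O_\mathfrak{p}^\times$ is trivial and $O_\mathfrak{p}^\times = 1+\mathfrak{m}_\mathfrak{p}$ is a pro-$2$ group, while the archimedean factor $\{\pm 1\}^{r_1}$ is a finite $2$-group. Hence $V$ is pro-$2$ and the claim follows. (Had $2$ been inert, the residue field would be $\F_4$ and the odd group $\F_4^\times\cong\Z/3$ could contribute a cubic extension unramified outside $2$, so the hypothesis is essential.) For the final assertion, any finite subgroup of $\bF^\times_2=\overline{\F}_2^\times$ lies in some $\F_{2^n}^\times$ and so is cyclic of odd order $2^n-1$; the fixed field of the kernel of a $2$-ramified character $\br\colon G_L\to\bF^\times_2$ is a finite abelian extension of $L$ unramified outside $2$ of degree $|{\rm Im}(\br)|$, which by the first part is both odd and a power of $2$, hence equal to $1$, so $\br$ is trivial.

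The only delicate step is setting up the class field theory exact sequence correctly and identifying the inverse limit of the finite unit contributions with $\prod_{\mathfrak{p}\mid 2} O_\mathfrak{p}^\times$; once the residue fields at $2$ are pinned down to $\F_2$, the pro-$2$ conclusion is purely formal group-theoretic bookkeeping, and the closure appearing in the quotient is harmless since $V$ is already pro-$2$.
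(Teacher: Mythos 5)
Your proposal is correct and follows essentially the same route as the paper: both arguments invoke class field theory to present the Galois group of a $2$-ramified abelian extension as an extension of $\Cl(L)$ (a $2$-group by hypothesis) by a quotient of the local unit contribution at $2$ and the real places, which is a (pro-)$2$-group precisely because the non-inertness of $2$ forces residue fields $\F_2$ at the primes above $2$. The only difference is presentational — you pass to the inverse limit over all moduli and argue with pro-$2$ groups, while the paper works one finite modulus $I$ at a time with the cokernel $\O_L^\times \to (\bigoplus_{v\,\text{real}}\R^\times/\R^\times_{>0})\oplus(\O_L/I)^\times$ — and you spell out the residue-field computation that the paper leaves implicit.
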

\begin{proof}
  Let $K/L$ be a finite abelian $2$-ramified extension.
  By class field theory,
  there exists an ideal $I\subset O_L$ dividing a power of the ideal $2O_L$ such that $K$ is the ring class field with modulus $I$. Letting $H$ denote the
  Hilbert class field of $L$, we have the natural surjection $\Gal(K/L)\lra \Gal(H/L)$.
  Its kernel is described by class field theory (\cite[p.106, Proposition 6.114]{KKS}) as
  \begin{equation*}
    \coker\Big(\O^\times_L\stackrel{\alpha}{\lra} \Big(\bigoplus_{v:\text{real place}}\R^\times/\R^\times_{>0}\Big)\oplus (\O_L/I)^\times \Big);
  \end{equation*}
  here $\alpha$ is the diagonal map whose components are induced from
  the natural maps $\O^\times_L\lra (\O_L/I)^\times$  and
  $\O^\times_L\lra L^\times_v=\R^\times$ if $v$ is a real place while
  we ignore the complex places.
  By assumption, the orders of both this group and the class group $\Gal(H/L)\simeq \Cl(L)$ are powers of $2$.
  Hence $[K:L]$ is a power of $2$.

  For the second claim, notice that $\im(\br)$ has odd order. Thus, the claim is obvious from
  the  previous result.
\end{proof}
\begin{remark}
  The claim of Lemma \ref{tri-char} is true for $L=\Q(\sqrt{-3})$.
  Although $2$ is inert in $\O_L$, the unit group $\O^\times_L\simeq \mu_6$ kills the $2$-primary part of the cokernel. Thus, the same argument works since the class number of $L$ is one.
\end{remark}

Before we tackle Theorem D, we recall the classification of the maximal subgroups $H$ inside $G_r:=\Sp_4(\F_{2^r}),\ r>1$ (see \cite[p.648]{DZ} or \cite[Section 7.2]{BHRD}):
\begin{enumerate}
  \item $H$ is reducible (over $\bF_2$), when viewed as a representation with values in $\GL_4(\bF_2)$ via the natural inclusion $H\subset G_r\subset \GL_4(\bF_2)$;
  \item $H=\SO^+(4,\F_{2^r}):=\{g\in \SL_4(\F_{2^r})\ |\ Q(gx)=Q(x)\}$ where
    $Q:\F^{\oplus 4}_{2^r}\lra \F_{2^r}$ is the quadratic map defined by $Q(x_1,x_2,x_3,x_4)=x_1x_4+x_2x_3$;
  \item $H=\SO^-(4,\F_{2^r}):=\{g\in \SL_4(\F_{2^r})\ |\ Q(gx)=Q(x)\}$ where
    $Q:\F^{\oplus 4}_{2^r}\lra \F_{2^r}$ is the quadratic map defined by $Q(x_1,x_2,x_3,x_4)=x_1x_4+x_2^2+\mu x_3^2$ for any $\mu\in\F_{2^r}$ such that $x^2+x+\mu$ is irreducible over $\F_{2^r}$;
  \item $H=\Sz(\F_{2^r})$, a Suzuki group (occurs only when $r$ is odd);
  \item $H=G_s$ with $s\mid r$.
\end{enumerate}
According to \cite[p.32, Theorem 1.6.22 and p.366, Theorem 7.3.3]{BHRD}:
\begin{align*}
  |G_r|             & =2^{4r} (2^r-1)^2 (2^r+1)^2 (2^{2r}+1), \\
  |\SO^+(\F_{2^r})| & =2^{2r+1} (2^r-1)^2,                    \\
  |\SO^-(\F_{2^r})| & =2^{2r+1} (2^r-1) (2^r+1),              \\
  |\Sz(\F_{2^r})|   & =2^{2r} (2^r-1) (2^{2r}+1).
\end{align*}
In particular, the orthogonal groups $\SO^\pm(\F_{2^r})$ are proper subgroups of $G_r$.

The maximal subgroups of the Suzuki group $\Sz(\F_{2^r})\subset G_r\subset  \GL_4(\F_{2^r})$ with
$r=2s+1>1$ odd are classified in
\cite[p.385, Table 8.16]{BHRD} as follows:
\begin{enumerate}
  \item[(Sz-1)] the cyclic group of order $2^r-1$;
  \item[(Sz-2)] $\Sz(\F_{2^r})\cap B$ where $B$ is the upper triangular Borel subgroup of $\Sp_4$;
  \item[(Sz-3)] the dihedral group of order $2(2^r-1)$;
  \item[(Sz-4)] $C_{2^r\pm 2^{s+1}+1}:_{2^r} C_4:=\langle g,h\ |\ g^{2^r\pm 2^{s+1}+1}=h^4=1,\
      hgh^{-1}=g^{2^r} \rangle$;
  \item[(Sz-5)] $\Sz(\F_{2^{r'}})$ with $r'|r$.
\end{enumerate}

\begin{proof}[Proof of Theorem D]
  Let $\br:G_\Q\lra \GSp_4(\bF_2)$ be an irreducible Galois representation unramified outside $2$.
  Put $G=\im(\br)$. As in the proof of Proposition \ref{irred_to_abs},
  we see that the similitude character of $\br$ is trivial. Hence $G\subset \Sp_4(\bF_2)$.
  Further, by Theorem C, we may assume $G\subset G_r$ for some minimal $r>1$.

  If $G=G_r$, we are done.

  Suppose then that $G\subsetneq G_r$ and let $H$ be a maximal subgroup of $G_r$ containing $G$.
  We consider the various cases in the classification of $H$ given above.

  \begin{enumerate}
    \item We can immediately exclude the reducible case since $\br$ is irreducible.

    \item $G\subseteq \SO^+(\F_{2^r})$.

      It is known (cf. \cite[Section 7.2.2]{BHRD}) that $\SO^+(\F_{2^r})$ has an index $2$ subgroup isomorphic to
      \begin{align*}
        \PGL_2(\F_{2^r})\times \PGL_2(\F_{2^r}) & =\PSL_2(\F_{2^r})\times \PSL_2(\F_{2^r}) \\
                                                & =\SL_2(\F_{2^r})\times \SL_2(\F_{2^r}).
      \end{align*}
      Thus, there exist an extension $L/\Q$ of degree at most $2$, and two Galois representations $\tau_i:G_L \lra \SL_2(\F_{2^r})\subset \GL_2(\F_{2^r})$ such that $\br|_{G_L}\simeq \tau_1\otimes \tau_2$.
      Since $\br$ is unramified outside $2$,
      \begin{equation*}
        L\in\left\{\Q,\Q(\sqrt{2}),\Q(\sqrt{-2})\right\}.
      \end{equation*}
      For each finite place $v\nmid 2$ of $L$, $(\tau_1\otimes \tau_2)(I_v)=\br(I_v)=\{1\}$, where $I_v$ is the inertia group of $v$.
      Thus, for each $g\in I_v$,
      \begin{equation*}
        \tau_1(g)=\diag(a,a),\quad \tau_2(g)=\diag(a^{-1},a^{-1})\quad\text{for some }a\in \F^\times_{2^r}.
      \end{equation*}
      But $\im(\tau_i)\subset \SL_2(\bF_{2^r})$, so we conclude that
      $\tau_i(I_v)=\{1\}$ for $i=1,2$. Hence $\tau_i$ is unramified outside $2$.
      Applying \cite{MT}, $\tau_i$ has to be reducible. Since $G_L$ is normal in $G_\Q$,
      by Clifford's theorem, $\br|_{G_L}$ is isomorphic to the direct sum of four one-dimensional
      representations of $G_L$ that are unramified outside $2$. However, all of them are the trivial representation of $G_L$ by Lemma \ref{tri-char}. Thus, $|\im(\br)|\le 2$ and $\br$ can not be irreducible over $\bF_2$. We conclude that this case cannot occur.

    \item $G\subseteq \SO_4^-(\F_{2^r})$.

      It is known (cf. \cite[Section 7.2.2]{BHRD}) that  $\SO^-(\F_{2^r})$ has an index $2$ subgroup isomorphic to
      \begin{equation*}
        \PGL_2(\F_{2^{2r}})=\PSL_2(\F_{2^{2r}})=\SL_2(\F_{2^{2r}}).
      \end{equation*}
      Thus,
      there exist an extension $L/\Q$ of degree at most $2$ unramified outside $2$ and a continuous group homomorphism $\br|_{G_L}:G_L\lra \GL_2(\F_{2^{2r}})$.
      As in the previous case, $\br$ cannot be irreducible over $\bF_2$.
      We conclude that this case cannot occur.

    \item $G\subseteq \Sz(\F_{2^r})$.

      In the above classification of the maximal subgroups of $\Sz(\F_{2^r})$,
      cases (Sz-1)--(Sz--3) cannot occur since $\br$ is irreducible over $\bF_2$.

      In case (Sz-4), there exists an extension $L/\Q$ of degree at most $2$ and unramified outside $2$ such that
      $\br(G_L)$ is contained in the dihedral subgroup of the  group in question.
      Thus, by a similar argument as before, $\br$ cannot be irreducible over $\bF_2$.

      The only possibility is therefore (Sz-5): $G=\Sz(\F_{2^r})$ for some odd integer $r$.
      But $G\subseteq \Sz(\F_{2})=C_5\rtimes C_4$ (the Frobenius group
      of order $20$) can be seen to be impossible by the same argument as above, since it contains the dihedral group $D_{10}$ of order $10$.
  \end{enumerate}

  The last claim of Theorem D follows from the fact that $|\Sz(\F_{2^3})|=29120$ and $|\Sp_4(\F_{2^2})|=979000$.
\end{proof}
Summing up, we have
\begin{corE}
  Assume GRH.
  There exist no irreducible $2$-ramified representations $\br:G_\Q\lra \GSp_4(\bF_2)$.
\end{corE}
\begin{proof}As in the proof of Theorem A, under GRH we have
  \begin{equation*}
    [\Q_{\br}:\Q]=|\im(\br)|<4800.
  \end{equation*}
  Thus, the claim follows from the last part of Theorem D.
\end{proof}

\appendix

\section{Computer calculations}\label{app-calc}

To facilitate the reproducibility of our results, we detail the calculations used in proofs in Sections \ref{gld} and \ref{GSp4tri}.

\subsection{Successive minima calculations (in SageMath)}\label{app-sage}

The SageMath function \texttt{allmin} given below takes a list of possible degrees of number fields, a prime $p$ and a value of the $p$-length, and returns the minimum over the list of the expression
\begin{equation*}
  \frac{1}{n}+\frac{1}{p-1}\sum_{i=1}^N \frac{1}{p^{m_i-1}}
\end{equation*}
that appears in the bound of Proposition \ref{prop:upper_bound_C}.
This minimum is then used according to the strategy described at the end of Section \ref{prelim}, in conjunction with Odlyzko's tables of discriminant bounds, as described in the proofs of Propositions \ref{prop:length2} and \ref{prop:length3}.

\begin{verbatim}
def allmin(nlst, p, length):
    themin = 100000
    theminat = 0
    for n in nlst:
        x = mymin(n, p, length)
        if x < themin:
            themin = x
            theminat = n
    return themin, theminat

def mymin(n, p, length):
    themin = 100000
    m = n.valuation(p)
    if length == 1:
        s = 1/((p-1)*p^(m-1)) + 1/n
        if s < themin:
            themin = s
    elif length == 2:
        for m1 in srange(1, m+1):
            s = 1/((p-1)*p^(m1-1)) + 1/((p-1)*p^(m-m1-1)) + 1/n
            if s < themin:
                themin = s
    elif length == 3:
        for m1 in srange(1, m+1):
            for m2 in srange(1, m-m1+1):
                s = 1/((p-1)*p^(m1-1)) + 1/((p-1)*p^(m2-1)) +\
                    1/((p-1)*p^(m-m1-m2-1)) + 1/n
                if s < themin:
                    themin = s
    return themin
\end{verbatim}

\subsection{Exhaustive group search (in Magma)}\label{app-magma}

The following calculations were used in the proof of Proposition \ref{abs-case}.

The Magma function \texttt{findgps} listed below returns the sequence of all conjugacy classes of subgroups of $S_6$ that have at least one absolutely irreducible $4$-dimensional representation defined over an extension of $\F_2$, together with the corresponding sequence of absolutely irreducible representations.

\begin{verbatim}
findgps := function()
  G := Sym(6);
  S := Subgroups(G);
  gps := [];
  mods := [];
  for c in S do
    h := c`subgroup;
    M := AbsolutelyIrreducibleModules(h, GF(2));
    if 4 in [Dimension(m) : m in M] then
      Append(~gps, h);
      Append(~mods, M);
    end if;
  end for;
  return gps, mods;
end function;
\end{verbatim}

The result of running this function is

\begin{verbatim}
Magma V2.28-15    Sun Apr 13 2025 10:00:51   [Seed = 766557115]
Type ? for help.  Type <Ctrl>-D to quit.
> load "findgps.magma";
Loading "findgps.magma"
> gps, irreps := findgps();
> gps;
[
    Permutation group acting on a set of cardinality 6
    Order = 20 = 2^2 * 5
        (1, 2, 6, 4)
        (1, 6)(2, 4)
        (1, 4, 2, 6, 3),
    Permutation group acting on a set of cardinality 6
    Order = 36 = 2^2 * 3^2
        (1, 6, 2, 3)(4, 5)
        (1, 2)(3, 6)
        (1, 2, 4)(3, 6, 5)
        (1, 4, 2)(3, 6, 5),
    Permutation group acting on a set of cardinality 6
    Order = 36 = 2^2 * 3^2
        (2, 6)
        (1, 4)(3, 6)
        (1, 5, 4)(2, 3, 6)
        (1, 4, 5)(2, 3, 6),
    Permutation group acting on a set of cardinality 6
    Order = 36 = 2^2 * 3^2
        (1, 3)(2, 5)(4, 6)
        (1, 4)(3, 6)
        (1, 5, 4)(2, 3, 6)
        (1, 4, 5)(2, 3, 6),
    Permutation group acting on a set of cardinality 6
    Order = 60 = 2^2 * 3 * 5
        (2, 5)(4, 6)
        (3, 5, 4),
    Permutation group acting on a set of cardinality 6
    Order = 60 = 2^2 * 3 * 5
        (2, 3)(5, 6)
        (1, 4, 2)(3, 5, 6),
    Permutation group acting on a set of cardinality 6
    Order = 72 = 2^3 * 3^2
        (1, 3)(2, 5)(4, 6)
        (2, 6)
        (1, 4)(3, 6)
        (1, 5, 4)(2, 3, 6)
        (1, 4, 5)(2, 3, 6),
    Permutation group acting on a set of cardinality 6
    Order = 120 = 2^3 * 3 * 5
        (1, 2)
        (1, 3)(2, 4, 6),
    Permutation group acting on a set of cardinality 6
    Order = 120 = 2^3 * 3 * 5
        (1, 2)(3, 5)(4, 6)
        (1, 2, 3, 6, 5, 4),
    Permutation group acting on a set of cardinality 6
    Order = 360 = 2^3 * 3^2 * 5
        (3, 6)(4, 5)
        (1, 5)(2, 3, 6, 4),
    Symmetric group acting on a set of cardinality 6
    Order = 720 = 2^4 * 3^2 * 5
        (1, 2, 3, 4, 5)
        (1, 2)(3, 5)(4, 6)
]
> mods;
[
    [
        GModule of dimension 1 over GF(2),
        GModule of dimension 4 over GF(2)
    ],
    [
        GModule of dimension 1 over GF(2),
        GModule of dimension 4 over GF(2),
        GModule of dimension 4 over GF(2)
    ],
    [
        GModule of dimension 1 over GF(2),
        GModule of dimension 2 over GF(2),
        GModule of dimension 2 over GF(2),
        GModule of dimension 4 over GF(2)
    ],
    [
        GModule of dimension 1 over GF(2),
        GModule of dimension 2 over GF(2),
        GModule of dimension 2 over GF(2),
        GModule of dimension 4 over GF(2)
    ],
    [
        GModule of dimension 1 over GF(2),
        GModule of dimension 2 over GF(2^2),
        GModule of dimension 2 over GF(2^2),
        GModule of dimension 4 over GF(2)
    ],
    [
        GModule of dimension 1 over GF(2),
        GModule of dimension 2 over GF(2^2),
        GModule of dimension 2 over GF(2^2),
        GModule of dimension 4 over GF(2)
    ],
    [
        GModule of dimension 1 over GF(2),
        GModule of dimension 4 over GF(2),
        GModule of dimension 4 over GF(2)
    ],
    [
        GModule of dimension 1 over GF(2),
        GModule of dimension 4 over GF(2),
        GModule of dimension 4 over GF(2)
    ],
    [
        GModule of dimension 1 over GF(2),
        GModule of dimension 4 over GF(2),
        GModule of dimension 4 over GF(2)
    ],
    [
        GModule of dimension 1 over GF(2),
        GModule of dimension 4 over GF(2),
        GModule of dimension 4 over GF(2),
        GModule of dimension 8 over GF(2^2),
        GModule of dimension 8 over GF(2^2)
    ],
    [
        GModule of dimension 1 over GF(2),
        GModule of dimension 4 over GF(2),
        GModule of dimension 4 over GF(2),
        GModule of dimension 16 over GF(2)
    ]
]
\end{verbatim}

We can then identify these subgroups up to isomorphism:

\begin{verbatim}
> [Order(g) : g in gps];
[ 20, 36, 36, 36, 60, 60, 72, 120, 120, 360, 720 ]
> [GroupName(g) : g in gps];
[ F5, S3^2, S3^2, C3:S3.C2, A5, A5, S3wrC2, S5, S5, A6, S6 ]
\end{verbatim}

Magma uses \texttt{F5} to denote the Frobenius group $F_{20}$ of order $5(5-1)$.

We also check which of these subgroups are transitive in $S_6$:

\begin{verbatim}
> [IsTransitive(g) : g in gps];
[ false, false, true, true, false, true, true, false, true, true,
  true ]
\end{verbatim}

\end{document}